\newcommand{\C}{{\mathbb C}}       
\newcommand{\R}{{\mathbb R}}       
\newcommand{\N}{{\mathbb N}}       %
\newcommand{\Z}{{\mathbb Z}}       
\newcommand{\DD}{{\mathcal D}}
\newcommand{\HH}{{\mathcal H}}
\newcommand{\stm}{{\setminus}}
\newcommand{\CC}{{\mathcal C}}
\newcommand{\diam}{{\rm diam}}
\newcommand{\dist}{{\rm dist}}
\newcommand{\ds}{\displaystyle }
\newcommand{\ha}{\mathcal{H} }
\newcommand{\ra}{\rightarrow}
\renewcommand{\a}{\alpha}
\newcommand{\rf}[1]{{(\ref{#1})}}
\newcommand{\riesz}{{R^s_{\ve_1,\ve_2}}}
\newcommand{\rieszz}{{R^1_{\ve_1,\ve_2}}}
\newcommand{\riesze}{{R^s_{\ve_1}}}
\newcommand{\supp}{\operatorname{supp}}
\newcommand{\vphi}{{\varphi}}
\newcommand{\ve}{{\varepsilon}}
\newcommand{\vv}{{\vspace{2mm}}}
\newcommand{\vvv}{{\vspace{3mm}}}
\newcommand{\wt}[1]{{\widetilde{#1}}}
\newcommand{\wolf}{{\dot{W}^{\mu}_{\frac{2}{3}(d-s),\frac32}}}
\newcommand{\wolfr}{{\dot{W}^{\chi_{B(0,r)}\mu}_{\frac{2}{3}(d-s),\frac32}}}
\newcommand{\wolfrk}{{\dot{W}^{\chi_{B(0,\wt r_k)}\mu}_{\frac{2}{3}(d-s),\frac32}}}
\newcommand{\meas}{{\measuredangle}}
\newtheorem{theorem}{Theorem}[section]
\newtheorem*{maintheorem*}{Theorem 1.1}
\newtheorem*{theorema*}{Theorem A}
\newtheorem*{theoremb*}{Theorem B}
\newtheorem*{theoremc*}{Theorem C}
\newtheorem*{theoremd*}{Theorem D}
\newtheorem{lemma}[theorem]{Lemma}
\newtheorem{coro}[theorem]{Corollary}
\newtheorem{propo}[theorem]{Proposition}
\theoremstyle{definition}
\theoremstyle{remark}
\newtheorem{remark}[theorem]{Remark}
\numberwithin{equation}{section}
\begin{document}

\title{Square functions of fractional homogeneity and Wolff potentials}

\author{Vasilis Chousionis, Laura Prat and Xavier Tolsa}

\address{Vasilis Chousionis. Department of Mathematics \\ University of Connecticut \\ Storrs \\ CT 06269, USA }

\address{Department of Mathematics and Statistics \\ University of Helsinki \\ P. O. Box 68 \\ FI-00014, Finland}
\email{vasileios.chousionis@helsinki.fi}

\address{Laura Prat. Departament de Ma\-te\-m\`a\-ti\-ques, Universitat Aut\`onoma de Bar\-ce\-lo\-na, Catalonia}
\email{laurapb@mat.uab.cat}

\address{Xavier Tolsa. Instituci\'{o} Catalana de Recerca i Estudis Avan\c{c}ats (ICREA) and Departament de Ma\-te\-m\`a\-ti\-ques, Universitat Aut\`onoma de Bar\-ce\-lo\-na, Catalonia}
\email{xtolsa@mat.uab.cat}

\thanks{V.C.\ was funded by the Academy of Finland Grant SA 267047. Also, partially supported by the ERC Advanced Grant 320501, while visiting Universitat Aut\`onoma de Bar\-ce\-lo\-na. 
L.P. and X.T. were partially supported by the grants MTM-2013-44304-P (MICINN, Spain) and 
 2014-SGR-75 (Catalonia).
X.T.\ was also supported by 
the ERC grant 320501 of the European Research
Council (FP7/2007-2013) and by Marie Curie ITN MAnET (FP7-607647).}

\maketitle

\begin{abstract}
 In this paper it is shown that for any measure $\mu$ in $\R^d$ and for a non-integer $0<s<d$, the Wolff energy 
 $\ds\iint_0^\infty\left(\frac{\mu(B(x,r))}{r^s}\right)^2\,\frac{dr}{r}d\mu(x)$ 
 is comparable to 
 $$\iint_0^\infty\left(\frac{\mu(B(x,r))}{r^s} - \frac{\mu(B(x,2r))}{(2r)^s}\right)^2\,\frac{dr}rd\mu(x),$$
 unlike in the case when $s$ is an integer. We also study the relation with the $L^2-$norm of 
 $s$-Riesz transforms, $0<s<1$, and we provide a counterexample in the integer case. 

\end{abstract}

\section{Introduction}

Let $\mu$ be a Radon measure in $\R^d$. Given $x\in\R^d,\;r>0$ and $s>0$, we set
\begin{equation}\label{defdifdens}
\Delta^s_\mu(x,r) := \frac{\mu(B(x,r))}{r^s} - \frac{\mu(B(x,2r))}{(2r)^s}.
\end{equation}
The main result of this paper shows the comparability between the squared $L^2(\mu)-$norm of a square function involving the difference of densities \eqref{defdifdens} 
and the Wolff energy of the measure $\mu$, for measures $\mu$ in $\R^d$ and non-integer $s$, $0<s<d$. 
Before stating precisely the theorem, we need to introduce some notation. 

Let $\theta^s_\mu(B(x,r))$ be the average $s$-dimensional density of $\mu$ on $B(x,r)$, that is 
$$\theta^s_\mu(B(x,r))=\frac{\mu(B(x,r))}{r^s},$$
so that 
$$\Delta^s_\mu(x,r) =
\theta^s_\mu(B(x,r)) - \theta^s_\mu(B(x,2r)).$$

Let $\a>0$ and $p \in (0,\infty)$ such that $\a p\in (0,d)$. The Riesz capacity $\dot{C}_{\alpha,p}$ of $E \subset \R^d$ is defined as 
$$\dot{C}_{\alpha,p}(E)=\sup_{\mu \in M(E)} \left(\frac{\mu(E)}{\|I_\a \ast \mu\|_{p'}}\right)^p, \quad \quad I_\a (x)= \frac{A_{d,\a}}{|x|^{d-\a}},$$
where $M(E)$ is the set of positive Radon measures  supported on $E$ and as usual $p'=p/(p-1)$. In nonlinear potential theory Riesz capacities occur naturally in the study of Sobolev spaces, for example they measure exceptional sets for functions in these function spaces, see e.g \cite{ah}. 

For $\a$ and $p$ as before the Wolff potential of a positive Radon measure $\mu$ is defined as 
$$\dot{W}^{\mu}_{\a,p}(x)=\int_0^\infty\left(\frac{\mu(B(x,r))}{r^{d-\a \, p}}\right)^2\,\frac{dr}{r},\quad\;x\in\R^d,$$
and its Wolff energy is
$$\int\dot{W}^{\mu}_{\a,p}(x)d\mu(x).$$
Riesz capacities can be characterized via Wolff potentials, as a well known theorem of Wolff, see e.g. \cite[Theorem 4.5.4]{ah}, asserts that
$$C^{-1}\,\|I_\a \ast \mu\|^{p'}_{p'}\leq \int\dot{W}^{\mu}_{\a,p}(x)d\mu(x) \leq C \,\|I_\a \ast \mu\|^{p'}_{p'}$$ 
where $C$ is a constant depending only on $d, \a$ and $p$.

In this paper we consider Wolff potentials with indices $\frac{2}{3}(d-s)$, $\frac32$, where $0<s<d$. Notice that  Wolff potentials with these choice of indices are related to the $s$-dimensional density of $\mu$, in particular
$$\dot{W}^{\mu}_{\frac{2}{3}(d-s),\frac32}(x)=\int_0^\infty\left(\frac{\mu(B(x,r))}{r^s}\right)^2\,\frac{dr}{r}=\int_0^\infty \theta_\mu^s(B(x,r))^2\,\frac{dr}{r},\quad\;x\in\R^d.$$
We further remark that these potentials are related to the Calder\'on-Zygmund capacities associated with the vector valued Riesz kernels $K^s(x)=x/|x|^{1+s}, x \in \R^d$, see e.g. the excellent survey \cite{ev} or \cite{mpv}.

Our main result reads as follows:
\begin{theorem} \label{teomain1}
Let $\mu$ be a Radon measure on $\R^d$ and $0<s<d$ be non-integer. Then
\begin{equation}
\label{squarefunction}
\iint_0^\infty\Delta^s_\mu (x,r)^2\,\frac{dr}rd\mu(x) \approx \int \dot{W}^{\mu}_{\frac{2}{3}(d-s),\frac32} (x)\,d \mu(x).
\end{equation}
\end{theorem}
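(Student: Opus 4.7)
The easy direction
$$\iint_0^\infty \Delta^s_\mu(x,r)^2 \,\frac{dr}{r}\, d\mu(x) \lesssim \int \wolf(x)\, d\mu(x)$$
holds for all $s$ and is essentially formal: from $(a-b)^2\leq 2a^2+2b^2$ one has $\Delta^s_\mu(x,r)^2 \leq 2\theta^s_\mu(B(x,r))^2 + 2\theta^s_\mu(B(x,2r))^2$, and integrating against $\frac{dr}{r}\,d\mu(x)$ followed by the change of variables $r\mapsto r/2$ in the second term yields the bound with an absolute constant.

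For the reverse direction, the plan is to first apply a standard truncation, reducing to $\mu$ finite and compactly supported, so that $\theta^s_\mu(B(x,r))\to 0$ as $r\to\infty$ and one has the telescoping identity $\theta^s_\mu(B(x,r)) = \sum_{j\geq 0} \Delta^s_\mu(x,2^j r)$. A direct Cauchy--Schwarz on this sum is ineffective: any weighted version, after the substitution $r'=2^j r$, collapses to a divergent scale-invariant sum, so a finer mechanism is required. The non-integrality of $s$ should enter through the elementary identity
$$(1-2^{-s})\,\theta^s_\mu(B(x,r)) = \Delta^s_\mu(x,r) + \eta^s_\mu(x,r),\qquad \eta^s_\mu(x,r) := \frac{\mu(B(x,2r)\setminus B(x,r))}{(2r)^s},$$
which reduces the main inequality, after squaring and integrating, to establishing
$$\iint_0^\infty \eta^s_\mu(x,r)^2 \,\frac{dr}{r}\, d\mu(x) \;\lesssim\; \iint_0^\infty \Delta^s_\mu(x,r)^2 \,\frac{dr}{r}\, d\mu(x) + \text{controllable remainder}.$$
The integer counterexample --- Hausdorff measure on an $s$-plane, where $\Delta^s_\mu\equiv 0$ but $\eta^s_\mu\not\equiv 0$ --- shows this estimate must genuinely exploit that $s\notin\{0,1,\ldots,d\}$.

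The main obstacle will be this last bound. My approach would be to invoke Wolff's theorem (quoted in the introduction) to rewrite $\int\wolf\,d\mu \approx \|I_\alpha\ast\mu\|_3^3$ with $\alpha=\frac{2(d-s)}{3}$, and then compare the $L^3$-norm of this Riesz potential to the square function of $\Delta^s_\mu$ through a Fourier or Littlewood--Paley decomposition of $I_\alpha\ast\mu$. The fractional exponent in $\widehat{I_\alpha}(\xi)=c|\xi|^{-\alpha}$ is precisely what rules out the flat-plane obstruction, and I expect it to translate into the needed quantitative gap between the annulus density $\eta^s_\mu$ and the symmetric difference $\Delta^s_\mu$. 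Carrying out this comparison against the measure $d\mu(x)$ rather than Lebesgue $dx$ is where the heavy technical work should lie, most likely through a dyadic stopping-time decomposition separating scales on which the density of $\mu$ is nearly constant from those on which it genuinely oscillates.
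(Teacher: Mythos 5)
Your first direction is fine and matches the paper, but the reverse inequality contains a genuine gap at exactly the point where the whole difficulty lives. The identity $(1-2^{-s})\theta^s_\mu(B(x,r))=\Delta^s_\mu(x,r)+\eta^s_\mu(x,r)$ does not reduce the problem: since $\eta^s_\mu(x,r)\le \theta^s_\mu(B(x,2r))$, the bound you say the proof ``reduces to,'' namely $\iint_0^\infty \eta^s_\mu(x,r)^2\,\frac{dr}{r}\,d\mu \lesssim \iint_0^\infty \Delta^s_\mu(x,r)^2\,\frac{dr}{r}\,d\mu$ plus a controllable remainder, is (up to constants and a change of variables $r\mapsto 2r$) equivalent to the theorem itself; you have restated the statement, not advanced toward it. The mechanism you then propose for this step --- Wolff's theorem to pass to $\|I_\alpha\ast\mu\|_3^3$ and a Fourier/Littlewood--Paley comparison with the square function, plus an unspecified stopping-time decomposition --- is only announced, never executed, and it is far from clear it can be executed: the square function is an $L^2(d\mu)$ object for a completely arbitrary Radon measure (no growth, no doubling, possibly infinite), whereas the Riesz-potential reformulation is an $L^3(dx)$ statement, and the ``quantitative gap'' you expect the fractional exponent of $\widehat{I_\alpha}$ to produce is precisely the assertion that has to be proved. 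So the heart of the argument --- where non-integrality of $s$ enters quantitatively --- is missing.

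For contrast, the paper makes $s\notin\Z$ enter through a qualitative rigidity statement combined with compactness, not through Fourier analysis. Lemma \ref{lemfac31} shows that if $\Delta^s_\mu(x,r)=0$ for all $x\in\supp(\mu)$ and all $r>0$, then $\mu$ is $s$-AD regular and, via analyticity of Gaussian convolutions $\phi_r\ast\mu$, its support is a real analytic variety, hence of integer dimension --- impossible for non-integer $s$. A blow-up/weak-$*$ compactness argument (Lemma \ref{blowup}) converts this into a quantitative lower bound for normalized measures on the unit ball, and Proposition \ref{lemcpt1} localizes it to every ball $B_0$ (a Chebyshev-type argument at nearby scales supplies the doubling needed to apply the rescaled lemma), yielding $\int_{\delta r_0}^{\delta^{-1}r_0}\int_{\delta^{-1}B_0}\Delta^s_\mu(x,r)^2\,d\mu\,\frac{dr}{r}\gtrsim \theta^s_\mu(B_0)^2\mu(B_0)$; summing over dyadic cubes with finite overlap gives the theorem, with no truncation of $\mu$ needed. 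To salvage your outline you would need a quantitative, scale- and location-uniform substitute for this per-ball lower bound, and nothing in your sketch provides it.
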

The notation $A\approx B$ means that there is an absolute constant $c>0$, depending on $d$ and $s$ (and sometimes on other fixed parameters), such that $c^{-1}A\le B\le cA$.

We remark that for integer $0<s<d$ the estimate \eqref{squarefunction} does not hold; just let $\mu=\ha^s |_V$, the restriction of the $s$-dimensional Hausdorff measure to any affine $s$-plane $V$. This is connected to the well known 
theorem of Marstrand \cite{Marstrand},  which asserts that for $s>0$, given a Radon measure $\mu$ on $\R^d$ such that the density $\lim_{r\to 0}\theta^s_\mu(B(x,r))$  exists and is positive and finite 
 in a set of positive $\mu$ measure, $s$ must be an integer.
  
In the context of integer $s$, there are also results relating rectifiability and the kind of square functions appearing in the left hand side of \eqref{squarefunction}. 
In \cite{tatianatolsa} it is shown that, for Radon measures $\mu$ in $\R^d$ with $\mu$-almost everywhere positive and finite lower and upper $s$-dimensional 
densities ($s\in\N$ here) the fact that $\mu$ is $s$-rectifiable is equivalent to the $\mu-$almost everywhere finiteness of 
$\int_0^1\Delta^s_\mu (x,r)^2\,\frac{dr}r$ and also to the fact that $\lim_{r\to 0}\Delta^s_\mu(x,r)=0$ $\mu$-almost everywhere. It is worth also saying that the 
first just mentioned equivalence from \cite{tatianatolsa} is a pointwise version of a previous result in \cite{CGLT}, which characterizes the so called uniform 
rectifiability. In fact, in \cite[Lemma 3.1]{CGLT} a blow up argument is used, which turns to be one of the main ingredients in the proof of Theorem 1.1 (see Lemma \ref{blowup}).

Let us remark that a suitable $p$-th version of Theorem \ref{teomain1}  
 holds for $p\in[1,\infty)$. Indeed, almost the same proof yields that,
for $0<s<d$ non-integer and such $p$,
$$\iint_0^\infty|\Delta^s_\mu (x,r)|^p\,\frac{dr}rd\mu(x) \approx 
\iint_0^\infty \left(\frac{\mu(B(x,r))}{r^s}\right)^p
\,\frac{dr}rd\mu(x),$$
with the comparability constant depending only on $s$, $d$ and $p$. Notice that
$\int_0^\infty \left(\frac{\mu(B(x,r))}{r^s}\right)^p
\,\frac{dr}r$ coincides with the Wolff potential $\dot{W}^{\mu}_{\frac{p}{p+1}(d-s),p} (x)$, so that
$$\iint_0^\infty|\Delta^s_\mu (x,r)|^p\,\frac{dr}rd\mu(x)\approx 
\int \dot{W}^{\mu}_{\frac{p}{p+1}(d-s),p} (x)\,d \mu(x).$$
Nevertheless, we think that the case $p=2$ is by far the most important one because of the connection with rectifiability
mentioned above and because of the relationship with Riesz transforms.

In fact, Theorem \ref{teomain1} answers a question of F. Nazarov (private communication), motivated by an open problem concerning the comparability between the Wolff energy of a measure $\mu$ in $\R^d$ 
and the squared $L^2(\mu)-$norm of the $s$-Riesz transform with respect to $\mu$, for non-integer $0<s<d$. To state the problem in detail, we
need to introduce some additional notation and background. For $0<s<d$, consider the signed vector valued Riesz kernels $$K^s(x)=\frac{x}{|x|^{1+s}},\;x\in\R^d,\;x\neq 0.$$
The $s$-Riesz transform of a real Radon measure $\mu$ with compact support is
$$R^s\mu(x)=\int K^s(y-x)d\mu(y)$$
whenever the integral makes sense. To avoid delicate problems with convergence, one considers the 
truncated $s-$Riesz transform of $\mu$, which is defined as $$R_{\varepsilon}^s\mu(x)=\int_{|y-x|>\varepsilon}K^s(y-x)d\mu(y),\;\;x\in\R^d,\;\varepsilon>0.$$
One says that $R^s\mu$ is bounded in $L^2(\mu)$ if the truncated Riesz transforms $R^s_{\varepsilon}\mu$ are bounded in $L^2(\mu)$ uniformly in $\varepsilon$.

It was shown in \cite{mpv} that given a finite Radon measure $\mu$ in $\R^d$ with growth $s$, $0<s<1$, that is, $\mu$ satisfying $\mu(B(x,r))\leq c_\mu r^{s}$ 
for all $x\in\R^d$, $r>0$ and some constant $c_\mu>0$,
one has  
\begin{equation}\label{conjecture}
\int\dot{W}^{\mu}_{\frac{2}{3}(d-s),\frac32}(x)d\mu(x)\approx\sup_{\varepsilon>0}\int|R_\varepsilon^s\mu(x)|^2d\mu(x),\qquad0<s<1.
\end{equation}
It is known that for the positive integers $s$ this comparability is false, while 
for non-integer $s\in(1,d)$ it is an open problem to prove (or disprove) it. There are some (very) partial results in this direction. In \cite{ENV} it is shown that for  $s  \in (0,d)$, the Wolff energy controls the $L^2$-norm of the $s$-Riesz transform; in \cite{JNV} it is proved that for $s\in (d-1,d)$, $d\geq 2$, boundedness of the $s$-Riesz 
transform of $\mu$ implies $\mu$-almost everywhere finiteness of a non-linear potential of exponential type. In the special case of measures supported on Cantor type sets, 
the comparability \eqref{conjecture} has been proven for all $0<s<d$ (see \cite{ev1}, \cite{Tolsawolff} and \cite{RT}).
Since the square function on the left hand side of \eqref{squarefunction} has a cancellative nature while the Wolff potential does not, one could think of Theorem 1.1 as being, in a sense, an intermediate stage towards the proof of \eqref{conjecture} for non-integer $1<s<d$,  since the $s$-Riesz transform also has an analogous cancellative nature.

The plan of the paper is the following. In Section \ref{sec:bddur} we prove Theorem 1.1. Section \ref{secriesz} is devoted to the study of the 
relation between the $L^2$-norm of the $s$-Riesz transform, the Wolff energy and the square function on the left hand side of \eqref{squarefunction}, for $0<s<1$. In the final section we construct a measure with linear growth and infinite Wolff energy 
 for which the $L^2(\mu)$-norm of the $1$-Riesz transform with respect to $\mu$ is finite and much bigger than $\displaystyle{\iint_0^\infty\Delta_\mu^1(x,r)^2\frac{dr}{r}d\mu(x)}$.

Throughout the paper, the letters $c,\;C$ will stand for absolute constants (which may depend on $d$ and $s$) that may change at different occurrences.
\\

\paragraph{\bf Acknowledgments.} We extend our sincere thanks to the referees for several useful comments 
which improved the readability of our paper.




\section{Densities and Wolff potentials} \label{sec:bddur}

The aim of this section is to prove our main result, Theorem $1.1$.
Its proof follows easily once we have at our disposal the following proposition.

\begin{propo}\label{lemcpt1}
Let $s$ be positive and non-integer. Then there exists some $\delta \in (0,1)$ such that for every Radon measure $\mu$ on $\R^d$ and every open ball $B_0\subset \R^d$ of radius $r_0$, 
$$\int_{\delta r_0}^{\delta^{-1}r_0} \int_{\delta^{-1}B_0}\Delta^s_\mu (x,r)^2\,d\mu(x)\frac{dr}r
\geq c(\delta)\, \theta^s_\mu({B_0})^2 \,\mu({B_0}),$$
for some constant $c(\delta)$.
\end{propo}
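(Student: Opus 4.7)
I would prove the proposition by a compactness and blow-up argument by contradiction, closely modeled on Lemma 3.1 of [CGLT] which is explicitly mentioned in the introduction as a key input. Assume the conclusion fails. Then for each $k\in\N$ there exist a Radon measure $\mu_k$ and a ball $B_k$ of radius $r_k$ such that, for $\delta_k = 2^{-k}$,
$$\int_{\delta_k r_k}^{\delta_k^{-1}r_k}\int_{\delta_k^{-1}B_k}\Delta^s_{\mu_k}(x,r)^2\,d\mu_k(x)\frac{dr}{r} < \tfrac{1}{k}\,\theta^s_{\mu_k}(B_k)^2\,\mu_k(B_k).$$
Since both sides of the desired inequality are homogeneous under translations, dilations, and rescaling of $\mu$, I would first normalize so that $B_k = B(0,1)$ and $\mu_k(B(0,1)) = 1$, hence $\theta^s_{\mu_k}(B_k) = 1$.

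Next I would extract a weakly convergent subsequence. A mild control on the growth of $\mu_k$ on larger balls is needed; this can be obtained from the hypothesis by a standard argument (either by a preliminary truncation of $\mu_k$, or by exploiting that uncontrolled growth would already force large contributions to the square function at large scales). Banach--Alaoglu then yields $\mu_k \rightharpoonup \mu_\infty$ for a Radon measure $\mu_\infty$ with $\mu_\infty(\overline{B(0,1)}) \ge 1$ after passing to a subsequence, so $\mu_\infty$ is nontrivial.

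The heart of the argument is to propagate the vanishing of the square function to $\mu_\infty$. For each fixed $x$ and each $r>0$ outside a countable exceptional set (those where $\mu_\infty(\partial B(x,r))>0$), weak convergence gives $\mu_k(B(x,r)) \to \mu_\infty(B(x,r))$, and analogously for $B(x,2r)$. Combined with Fatou's lemma (or lower semicontinuity of the integral under weak convergence, applied together with the fact that $\delta_k\to 0$ so the range of scales and points exhausts $(0,\infty)\times\R^d$), one deduces
$$\iint_0^\infty \Delta^s_{\mu_\infty}(x,r)^2\,d\mu_\infty(x)\frac{dr}{r} = 0.$$
Therefore $\Delta^s_{\mu_\infty}(x,r) = 0$ for $\mu_\infty$-a.e.\ $x$ and a.e.\ $r>0$; by right-continuity of $r\mapsto\mu_\infty(B(x,r))$ this extends to every $r>0$. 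Iterating the identity $\theta^s_{\mu_\infty}(B(x,r)) = \theta^s_{\mu_\infty}(B(x,2r))$ in both directions shows that $r \mapsto \theta^s_{\mu_\infty}(B(x,r))$ is constant in $r$ for $\mu_\infty$-a.e. $x$. Hence the $s$-dimensional density $\lim_{r\to0}\theta^s_{\mu_\infty}(B(x,r))$ exists, is finite, and is positive on a set of positive $\mu_\infty$-measure (since $\mu_\infty$ is nontrivial). Marstrand's theorem, cited in the introduction, forces $s$ to be an integer, contradicting the hypothesis.

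The main obstacle I anticipate is the limiting step: ensuring that $\mu_\infty$ is nontrivial (the normalization controls $\mu_k$ on $B(0,1)$ but weak convergence could a priori lose mass to the boundary or to infinity), that the mass of $\mu_k$ on enlarged balls stays bounded uniformly in $k$ so that $\mu_\infty$ is actually Radon, and that the square-function inequality passes cleanly to the limit along the expanding ranges $[\delta_k r_k, \delta_k^{-1} r_k]$ and $\delta_k^{-1} B_k$. Once these technicalities are handled, the use of Marstrand's theorem to produce the contradiction is clean and is precisely the point at which the non-integrality of $s$ is used.
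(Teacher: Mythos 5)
Your overall strategy (rescale, blow up, extract a weak-$*$ limit, and derive a contradiction from the vanishing of $\Delta^s$ on the limit measure) is indeed the skeleton of the paper's argument, but the decisive final step of your proposal is wrong. From $\Delta^s_{\mu_\infty}(x,r)=0$ for all $r>0$ you only get $\theta^s_{\mu_\infty}(B(x,r))=\theta^s_{\mu_\infty}(B(x,2r))$, i.e.\ the function $r\mapsto \theta^s_{\mu_\infty}(B(x,r))$ is invariant under the dyadic rescaling $r\mapsto 2r$ (it is periodic in $\log r$ with period $\log 2$). "Iterating in both directions" does \emph{not} make it constant in $r$, so the limit $\lim_{r\to0}\theta^s_{\mu_\infty}(B(x,r))$ need not exist, and Marstrand's density theorem --- which requires the existence of a positive finite limit density on a set of positive measure --- cannot be invoked. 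This is precisely the point where the paper has to work much harder: in Lemma \ref{lemfac31} it shows that a nonzero measure with $\Delta^s_\mu\equiv 0$ on $\supp\mu$ is $s$-AD regular, and then, following \cite[Lemma 3.9]{CGLT}, that $\supp\mu$ is contained in the zero set of a real analytic function built from Gaussian convolutions $\phi_{2^{-k}}*\mu-\phi_{2^k}*\mu$ (which extend holomorphically to $\C^d$), hence is a real analytic variety of integer Hausdorff dimension --- contradicting $s$-AD regularity with $s\notin\N$. Without an argument of this type your contradiction does not materialize, and this is the genuinely nontrivial ingredient missing from your proposal.

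Two further points you flag as "technicalities" are also where real work is needed, and the paper's structure is designed around them. First, the passage to the limit in $\iint |\Delta^s_{\mu_k}|\,d\mu_k\,\frac{dr}{r}$ is delicate because both the integrand and the underlying measure depend on $k$; the paper replaces $\chi_{[0,1]}$ by smooth radial profiles $\vphi$ (Lemma \ref{lemconvex}), uses equicontinuity to get uniform convergence of $(\phi_t-\phi_{2t})*\mu_{m_j}$ on compact parameter sets, and only afterwards recovers the statement for $\Delta^s_\mu$ by approximating $\chi_{[0,1]}$. Second, the uniform mass bound needed for compactness is not free: the paper first runs a Chebyshev argument inside the proof of Proposition \ref{lemcpt1} to show that either the desired inequality already holds or $\mu$ satisfies a doubling-type bound $\mu(4B_0)\leq 2^{5s+2}\mu(2B_0)$ (and $\mu(2\delta^{-1}B_0)\lesssim \delta^{-s}\mu(2B_0)$), and the blow-up lemma is then proved under the normalization $1\leq\mu(\bar B(0,1))\leq\mu(B(0,2))\leq 2^{5s+2}$, with boundedness on compact sets extracted from the smallness assumption by a telescoping sum of $|\Delta^s_{\mu_m}(x,2^{k-1}r)|$. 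Your parenthetical remark that "uncontrolled growth would force large contributions to the square function" is the right idea, but as written your proof neither secures the doubling normalization before blowing up nor converts the resulting $L^1$-type lower bound back to the $L^2$ statement (the paper does this last step with Cauchy--Schwarz and the bound on $\mu(2\delta^{-1}B_0)$).
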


\begin{proof}[Proof of Theorem 1.1]It is enough to prove that
\begin{equation}
\iint_0^\infty\Delta^s_\mu (x,r)^2\,\frac{dr}rd\mu(x) \gtrsim \int \dot{W}^{\mu}_{\frac{2}{3}(d-s),\frac32} (x)\,d \mu(x),
\end{equation}
since the remaining inequality is immediate. 

Let $\DD$ denote the usual lattice of dyadic cubes of $\R^d$, and let $\DD_k\subset \DD, k \in \Z,$ be the subfamily of the dyadic cubes with 
side length $\ell(Q)=2^{k}$. For $Q\in\DD$, let $B_Q=B(x_Q, r(Q))$ where $x_Q$ is the center of $Q$ and $r(Q)=(2+\sqrt{d})\ell(Q)$. 
Using Fubini's theorem and Proposition \ref{lemcpt1} one easily sees that
\begin{equation}
\begin{split}
\label{wolfpr}
\int \wolf (x)\, d \mu (x) &= \sum_{k \in \Z} \sum_{Q \in \DD_k} \int_Q \int_{2^k}^{2^{k+1}} \left(\frac{\mu(B(x,r))}{r^s}\right)^2\,\frac{dr}{r} d \mu (x)\\
&\lesssim  \sum_{Q \in \DD} \theta^s_\mu(B_Q)^{2} \mu(B_Q)\\
&\lesssim \sum_{Q \in \DD}\int_{\delta \, r(Q)}^{\delta^{-1} \, r(Q)} \int_{\delta^{-1}B_Q}\Delta^s_\mu (x,r)^2\,d\mu(x)\frac{dr}r,
\end{split}
\end{equation}
for some $\delta\in (0,1)$.
Given $k \in \Z$ the family of balls $\{\delta^{-1} B_Q\}_{Q \in \DD_k}$ has finite overlap (which depends only on $\delta$ and on the ambient dimension $d$). Therefore using \eqref{wolfpr} we get,
\begin{equation*}
\begin{split}
\int \wolf (x)\, d \mu (x)&\lesssim \sum_{k \in \Z} \sum_{Q \in \DD_k}\int_{\delta^{-1}B_Q} \int_{\delta \, (2+\sqrt{d}) \, 2^k}^{\delta^{-1} \, (2+\sqrt{d}) \, 2^k} \Delta^s_\mu (x,r)^2\frac{dr}r\,d\mu(x) \\
&\lesssim \int \sum_{k \in \Z} \int_{\delta \, (2+\sqrt{d}) \, 2^k}^{\delta^{-1} \, (2+\sqrt{d}) \, 2^k} \Delta^s_\mu (x,r)^2\frac{dr}r\,d\mu(x) \\
&\lesssim \iint_0^\infty \Delta^s_\mu (x,r)^2\frac{dr}r\,d\mu(x).
\end{split}
\end{equation*}
\end{proof}
Before providing the proof of Proposition \ref{lemcpt1} we need some auxiliary results and additional notation. For any Borel function $\vphi: [0, \infty) \ra \R$ let 
$$\vphi_t(x)=\frac{1}{t^s} \vphi \left(\frac xt \right), \, t>0,$$
and define
$$\Delta^s_{\mu,\vphi} (x,t):= \int\bigr(\vphi_t (|y-x|)-\vphi_{2t}(|y-x|)\bigr)\,d\mu(y),$$
whenever the integral makes sense. 

\begin{lemma}\label{lemconvex}
 Let $\vphi:[0,\infty)\to\R$ be a $\CC^\infty$ function supported in 
$[0,2]$ which is constant in $[0,1/2]$. Let $x\in\R^d$ and $0\leq r_1<r_2$. Then
$$\int_{r_1}^{r_2} |\Delta^s_{\mu,\vphi}(x,r)|\,\frac{dr}r \leq c
\int_{r_1/2}^{2r_2} |\Delta^s_{\mu}(x,r)|\,\frac{dr}r,$$
where $c$ depends only on $\vphi$.
\end{lemma}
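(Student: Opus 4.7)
The plan is to use a layer-cake representation of $\varphi$ to reduce $\Delta^s_{\mu,\vphi}$ to a superposition of terms of the form $\Delta^s_\mu$ at nearby scales, and then swap the order of integration. Since $\vphi\in\CC^\infty$ is constant on $[0,1/2]$ and supported in $[0,2]$, its derivative $\vphi'$ is supported in $[1/2,2]$, and the fundamental theorem of calculus gives, for every $u\geq 0$,
$$\vphi(u)=\int_{1/2}^{2}(-\vphi'(v))\,\chi_{[0,v]}(u)\,dv.$$

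Plugging this into $\vphi_t(|y-x|)=t^{-s}\vphi(|y-x|/t)$ and integrating in $\mu$ gives, via Fubini,
$$\int \vphi_t(|y-x|)\,d\mu(y)=\int_{1/2}^{2}(-\vphi'(v))\,\frac{\mu(B(x,tv))}{t^s}\,dv=\int_{1/2}^{2}(-\vphi'(v))\,v^{s}\,\theta^s_\mu(B(x,tv))\,dv,$$
and the same identity holds with $t$ replaced by $2t$. Subtracting the two expressions yields the key representation
$$\Delta^s_{\mu,\vphi}(x,t)=\int_{1/2}^{2}(-\vphi'(v))\,v^{s}\,\Delta^s_{\mu}(x,tv)\,dv.$$

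From here the estimate is immediate. By the triangle inequality and Fubini,
$$\int_{r_1}^{r_2}|\Delta^s_{\mu,\vphi}(x,r)|\,\frac{dr}{r}\leq \int_{1/2}^{2}|\vphi'(v)|\,v^{s}\int_{r_1}^{r_2}|\Delta^s_{\mu}(x,rv)|\,\frac{dr}{r}\,dv.$$
Substituting $u=rv$ (so that $\frac{dr}{r}=\frac{du}{u}$) in the inner integral, and using that for $v\in[1/2,2]$ the interval $[r_1 v,r_2 v]$ lies inside $[r_1/2,\,2r_2]$, we obtain
$$\int_{r_1}^{r_2}|\Delta^s_{\mu,\vphi}(x,r)|\,\frac{dr}{r}\leq \left(\int_{1/2}^{2}|\vphi'(v)|\,v^{s}\,dv\right)\int_{r_1/2}^{2r_2}|\Delta^s_{\mu}(x,u)|\,\frac{du}{u},$$
and the constant in front depends only on $\vphi$ (in fact only on $\|\vphi'\|_\infty$ and $s$).

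There is no real obstacle here: once one recognizes that $\chi_{[0,v]}$ integrated against $\mu/t^s$ is exactly $v^s\theta^s_\mu(B(x,tv))$, the layer-cake decomposition reduces the lemma to bookkeeping. The only point to watch is that we need $\vphi'$ supported away from $0$ (so that the change of variables doesn't move small $r$'s into a region not controlled by the right-hand side); this is guaranteed by the hypothesis that $\vphi$ is constant on $[0,1/2]$.
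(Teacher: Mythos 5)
Your proof is correct and follows essentially the same route as the paper: both write $\vphi$ via its derivative as a superposition of indicators $\chi_{[0,v]}$ with $\vphi'$ supported in $[1/2,2]$, arrive at the identity $\Delta^s_{\mu,\vphi}(x,t)=\int_{1/2}^{2}(-\vphi'(v))\,v^{s}\,\Delta^s_{\mu}(x,tv)\,dv$, and then conclude by Fubini and a change of scale. The only difference is cosmetic bookkeeping at the end (you substitute $u=rv$ inside the $v$-integral, while the paper first bounds $|\Delta^s_{\mu,\vphi}(x,r)|$ by $\frac1r\int_{r/2}^{2r}|\Delta^s_\mu(x,u)|\,du$ and then integrates), so there is nothing to add.
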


\begin{proof}
This follows by writing $\vphi$ as a suitable convex combination of functions of the form $\chi_{[0,r]}$.
For completeness we show the details. For $t\geq0$ and $R>0$, we write
$$\frac1{R^s}\vphi\left(\frac tR\right) = -\int_0^\infty \frac1{R^{s+1}}\,\vphi'\left(\frac rR\right) \,\chi_{[0,r]}(t)\,dr,$$
so that, by Fubini and changing variables,
\begin{equation}\label{eqff22}
\begin{split}
\Delta^s_{\mu,\vphi}(x,R) & = - \int_0^\infty \!\frac1{R^{s+1}} \,\vphi'\!\left(\frac rR\right) \chi_{[0,r]}(|\cdot|)* \mu(x) \,dr \\
&\quad \quad \quad
+ \int_0^\infty \!\!\frac1{(2R)^{s+1}}\, \vphi'\!\left(\frac r{2R}\right) \chi_{[0,r]}(|\cdot|)* \mu(x) \,dr\\
& = 
- \int_0^\infty  \vphi'(t) \left(\frac1{R^s}\,\chi_{[0,tR]}(|\cdot|)* \mu(x) - \frac1{{(2R)}^s}\,\chi_{[0,2tR]}(|\cdot|)* \mu(x)
\right) \,dt \\
&=
- \int_{1/2}^2  t^s\,\vphi'(t) \,\Delta^s_\mu(x,tR) \,dt,
\end{split}
\end{equation}
taking into account that $\vphi'$ is supported on $[1/2,2]$ in the last identity.
As a consequence we get
$$|\Delta^s_{\mu,\vphi}(x,r)|\leq \left|
\int_{1/2}^2  t^s\,\vphi'(t) \,\Delta^s_\mu(x,tr) \,dt\right|
\lesssim \int_{1/2}^2  |\Delta^s_\mu(x,tr)| \,dt = \int_{r/2}^{2r}  |\Delta^s_\mu(x,u)| \,\frac{du}r.$$
Thus
\begin{align*}
\int_{r_1}^{r_2} |\Delta^s_{\mu,\vphi}(x,r)|\,\frac{dr}r & \lesssim  
\int_{r_1}^{r_2} 
\int_{r/2}^{2r} |\Delta^s_\mu(x,u)| \,du\,\frac{dr}{r^2} \lesssim
\int_{r_1/2}^{2r_2} 
 |\Delta^s_\mu(x,u)| \,\frac{du}u.
\end{align*}
\end{proof}

\begin{remark}\label{realf}
Notice that if $\vphi:[0, \infty) \ra \R$ is a smooth function vanishing at infinity, then as in \eqref{eqff22} we get 
$$\Delta^s_{\mu,\vphi}(x,R)=-\int_{0}^\infty  t^s\,\vphi'(t) \,\Delta^s_\mu(x,tR) \,dt.$$
\end{remark}

\begin{lemma}\label{lemfac31} 
Let $s$ be positive and non-integer and let $\mu$ be a non-zero Radon measure in $\R^d$. 
Then $\Delta^s_\mu(x_0,r_0) \neq 0$ for some $x_0 \in \supp (\mu)$ and $r_0>0$. 
\end{lemma}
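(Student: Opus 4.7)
The plan is by contradiction: suppose $\Delta^s_\mu(x,r)=0$ for every $x\in\supp\mu$ and every $r>0$. By \eqref{defdifdens} this reads $\mu(B(x,2r))=2^s\mu(B(x,r))$, and iterating yields
\[
\mu(B(x,2^k r))=2^{ks}\mu(B(x,r)), \qquad k\in\Z,\ x\in\supp\mu,\ r>0.
\]
Thus $r\mapsto\theta^s_\mu(B(x,r))$ is invariant under doubling; the monotonicity of $r\mapsto\mu(B(x,r))$ then sandwiches it in $[2^{-s},2^s]\,\theta^s_\mu(B(x,1))$. Since $x\in\supp\mu$ together with $\mu$ being Radon forces $\theta^s_\mu(B(x,1))\in(0,\infty)$, the density $r\mapsto\theta^s_\mu(B(x,r))$ is a bounded, positive, $\log_2$-periodic function. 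In particular this rules out atoms: if $\mu(\{x\})>0$ then $\theta^s_\mu(B(x,r))\ge\mu(\{x\})/r^s\to\infty$ as $r\to 0$, contradicting the boundedness.

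Writing $\mu(B(x,r))=r^s g_x(\log_2 r)$ with $g_x$ a positive, bounded, $1$-periodic function, I would next show $g_x=g_y$ for all $x,y\in\supp\mu$. From $B(x,r-|x-y|)\subset B(y,r)\subset B(x,r+|x-y|)$ one extracts, after dividing by $r^s$, a sandwich bound for $g_y(\log_2 r)$ in terms of $g_x(\log_2(r\pm|x-y|))$. Letting $r\to\infty$ while restricting to common continuity scales of $\mu(B(x,\cdot))$ and $\mu(B(y,\cdot))$ (a Lebesgue-full set of $r$, since $\mu$ is atomless), and exploiting the $1$-periodicity in $\log_2 r$, one pins down $g_x(t)=g_y(t)$ for a full Lebesgue set of $t\in[0,1)$, hence for every $t$ by the one-sided continuity of $r\mapsto\mu(B(x,r))$. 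Therefore $\mu(B(x,r))$ depends only on $r$ for $x\in\supp\mu$: the measure $\mu$ is uniformly distributed on its support.

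To finish, the classical rigidity theorem for uniformly distributed measures in $\R^d$ (Kirchheim–Preiss, in essence already present in Marstrand's work) asserts that the support of such a measure is a real-analytic variety and hence of integer Hausdorff dimension; combined with the $s$-scaling of $\mu$ this forces $s\in\Z$, contradicting the hypothesis. I expect the main obstacle to be exactly this last step: the iteration, atomlessness argument, and sandwich comparison are elementary, but passing from uniform distribution on the support to integrality of $s$ requires a genuinely non-trivial analytic–geometric input, either the Kirchheim–Preiss rigidity theorem or an equivalent blow-up/tangent-measure argument in the spirit of the ``blowup'' lemma alluded to in the introduction.
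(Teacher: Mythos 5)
Your argument is correct, but it takes a genuinely different route from the paper's. You upgrade the hypothesis $\Delta^s_\mu\equiv 0$ on $\supp\mu$ all the way to uniform distribution: the $\log_2$-periodicity of $r\mapsto\theta^s_\mu(B(x,r))$ plus the sandwich $B(x,r-|x-y|)\subset B(y,r)\subset B(x,r+|x-y|)$ gives $\mu(B(x,r))=\mu(B(y,r))$ for all $x,y\in\supp\mu$, and you then invoke the Kirchheim--Preiss rigidity theorem (the support of a uniformly distributed measure is a real analytic variety, hence of integer Hausdorff dimension), contradicting the non-integer dimension forced by the $s$-AD regularity you derived. The paper never proves uniform distribution: it stops at $s$-AD regularity, uses Remark \ref{realf} to transfer the vanishing of $\Delta^s_\mu$ to the Gaussian-smoothed differences $\phi_{2^{-k}}*\mu-\phi_{2^k}*\mu$ on $\supp\mu$, forms $F=\sum_{k>0}2^{-k}\bigl(\phi_{2^{-k}}*\mu-\phi_{2^k}*\mu\bigr)^2$, shows $\supp\mu=F^{-1}(0)$ via the AD regularity, and checks directly that each smoothed difference extends holomorphically to $\C^d$, so that $\supp\mu$ is an analytic variety; i.e.\ it reproves the needed Kirchheim--Preiss-type statement in a self-contained way (following \cite[Lemma 3.9]{CGLT}) rather than citing it. Your route is shorter given the black box and establishes the stronger intermediate fact of uniform distribution; the paper's is self-contained modulo standard facts on analytic varieties from \cite{nar} and avoids the $g_x=g_y$ step altogether. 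Two small points to tighten in your sketch: the full-measure set of ``common continuity scales'' comes from the countability of the radii $r$ with $\mu(\partial B(x,r))>0$ (local finiteness), not from atomlessness --- and in fact no exceptional set is needed, since $r\mapsto\mu(B(x,r))$ is left-continuous for open balls, so passing to the limit $r=2^{t+n}\to\infty$ in the two lower bounds $\mu(B(x,r-|x-y|))\le\mu(B(y,r))$ and $\mu(B(y,r-|x-y|))\le\mu(B(x,r))$ already yields $g_x(t)=g_y(t)$ for every $t$; and the rigidity you use really is Kirchheim--Preiss rather than Marstrand (Marstrand's theorem requires the existence of the pointwise density, which your periodic $g$ need not provide), so it must be quoted as an external result --- exactly the result whose proof technique the paper reproduces.
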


\begin{proof} By way of contradiction suppose that $\Delta^s_\mu(x,r)=0$ for all $x \in \supp (\mu)$ and all $r>0$. We will first show that in that case the measure $\mu$ 
is $s$-AD regular and we will then proceed as in the proof of \cite[Lemma 3.9]{CGLT}. Recall that $\mu$ is
called $s$-Ahlfors-David regular, or $s$-AD regular, if for some constant $c_\mu>0$, 
$$c_\mu^{-1}r^{s}\leq \mu(B(x,r)) \leq c_\mu\,r^{s}\quad\mbox{for all $x\in\supp(\mu)$,
$0<r\leq\diam(\supp(\mu))$}.$$ 

To prove the $s$-AD-regularity of $\mu$, assume for simplicity that $0\in\supp\mu$. Since $\Delta^s_\mu(0,r)=0$ for all $r>0$, we deduce that
$\mu(B(0,2^n))= 2^{ns}\,\mu(B(0,1))$ for all $n\geq1$.
For $x\in\supp(\mu)\cap B(0,2^{n-1})$ and any integer $m\leq n$, using now that $\Delta^s_\mu(x,r)=0$ for all $r>0$, we infer that $\mu(B(x,2^m))= 2^{(m-n)s}\mu(B(x,2^n))$.
Since $B(0,2^{n-1})\subset B(x,2^n)\subset B(0,2^{n+1})$, we have
$$2^{(n-1)s} \mu(B(0,1))\leq \mu(B(x,2^n)) \leq 2^{(n+1)s} \mu(B(0,1)).$$
Thus
$$c_0\,2^{(m-1)s}\leq \mu(B(x,2^m)) \leq c_0\,2^{(m+1)s} ,$$
with $c_0=\mu(B(0,1))$.
Since $n$ can be taken arbitrarily large and the preceding estimate holds for all $m\leq n$,
the $s-$AD regularity of $\mu$ follows.

Let $\vphi(u)=e^{-u^2}, \, u \geq 0$. Then by Remark \ref{realf} it follows that $\Delta^s_{\mu,\vphi}(x,r)=0$ for all $x \in \supp(\mu)$ and for all $r>0$. This is equivalent to
$$\phi_r \ast \mu (x)-\phi_{2r} \ast \mu (x)=0$$ 
for all $x \in \supp(\mu)$ and for all $r>0$, where $\phi:\R^d \ra \R$ is defined by $\phi(y)=e^{-{|y|^2}}$. In particular
\begin{equation}\label{eqkw433}
\phi_{2^{-k}} *\mu(x) - \phi_{2^k}*\mu(x) =0\qquad \mbox{ for all $k>0$ and all $x\in\supp(\mu)$.}
\end{equation}
Now consider the function $F:\R^d\to \R$ given by
$$F(x) = \sum_{k>0}2^{-k}\Bigl(\phi_{2^{-k}} *\mu(x) - \phi_{2^k}*\mu(x)\Bigr)^2.$$
Taking into account that $|\phi_{2^{-k}} *\mu(x) - \phi_{2^k}*\mu(x)|\leq c$ for all $x\in\R^d$ and $k\in\N$, it is clear 
that $F(x)<\infty$ for all $x\in\R^d$, and so $F$ is well defined. 
Moreover, by \rf{eqkw433} we have $F =0$ on  $\supp(\mu)$. 

Now we claim that $F(x)>0$ for all $x\in\R^d\setminus \supp(\mu)$.
Indeed, it follows easily that
$$\lim_{k\to\infty}\phi_{2^{-k}}*\mu(x)= 0\qquad \mbox{for all $x\in\R^d\setminus \supp(\mu)$,}$$
while, by the $s$-AD-regularity of $\mu$,
$$\liminf_{k\to\infty}\phi_{2^{k}}*\mu(x)\geq c\, c_0\qquad \mbox{for all $x\in\R^d$.}$$
Thus if $x\in\R^d\setminus \supp(\mu)$ we have $\phi_{2^{-k}} *\mu(x) - \phi_{2^k}*\mu(x)\neq0$ for all large enough $k>0$,
which implies that $F(x)>0$ and proves our claim. We have thus shown that  $\supp(\mu) = F^{-1}(0)$.


Next we will prove that the zero set of $F$ is a real analytic variety. It is enough to check that
$\phi_{2^{-k}} *\mu - \phi_{2^k}*\mu$ is a real analytic function for each $k>0$, because the
zero set of a real analytic function is a real analytic variety and the intersection
of any family  of real analytic varieties is again a real analytic variety; see \cite{nar}.   So it is
enough to show that $\phi_r *\mu$ is a real analytic function for every $r>0$. To this end, we consider 
the function $f:\C^d\to\C$ defined by
$$f(z_1,\ldots,z_d)= \frac1{r^n}\int \exp\Biggl(-r^{-2}\sum_{i=1}^d (y_i-z_i)^2\Biggr)\,d\mu(y).$$
It is easy to check that $f$ is well defined and holomorphic in the whole $\C^d$, and thus
$\phi_r *\mu = f|_{\R^d}$ is real analytic.

Therefore we have shown that $\supp (\mu)$ is an analytic variety, in particular this implies that $\supp(\mu)$ has Hausdorff dimension $n$ for some $n \in \N$.  Since $\mu$ is $s$-AD regular,  $\supp(\mu)$ has non-integer Hausdorff dimension and we have thus reached a contradiction. 
\end{proof}

The following blow-up lemma is essential for the proof of Proposition \ref{lemcpt1}. The proof is inspired by the proof of \cite[Lemma 3.1]{CGLT}
\begin{lemma}\label{blowup}
Let $s$ be a positive and non-integer real number. There exists some $\delta>0$ such that for every Radon measure $\mu$ in $\R^d$ which satisfies $1\leq \mu(\bar B(0,1))\leq\mu(B(0,2))\leq 2^{5s+2},$ the following estimate holds
$$\int_{\delta}^{\delta^{-1}}\!\!
 \int_{x\in B(0,\delta^{-1})} |\Delta^s_{\mu} (x,r)|\,d\mu(x)\,\frac{dr}r \geq \delta^{1/2}.$$
\end{lemma}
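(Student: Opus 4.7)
The natural approach is a compactness/contradiction argument in the spirit of \cite[Lemma 3.1]{CGLT}, ultimately reduced to Lemma~\ref{lemfac31}. If the conclusion fails, one extracts a sequence of Radon measures $\mu_n$ satisfying the normalization $1\le\mu_n(\bar B(0,1))\le\mu_n(B(0,2))\le 2^{5s+2}$ and some $\delta_n\downarrow 0$ with
$$I_n:=\int_{\delta_n}^{\delta_n^{-1}}\int_{B(0,\delta_n^{-1})}|\Delta^s_{\mu_n}(x,r)|\,d\mu_n(x)\,\frac{dr}{r}<\delta_n^{1/2}.$$
The aim is to pass to a subsequential weak-$*$ limit $\mu\neq 0$ satisfying $\Delta^s_\mu(x,r)=0$ for every $x\in\supp(\mu)$ and $r>0$, and then invoke Lemma~\ref{lemfac31} for the contradiction.

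\emph{Step 1: uniform mass bounds and extraction of a weak limit.} The a priori hypothesis controls only $\mu_n(B(0,2))$; for larger balls, fix $x_0\in\supp(\mu_n)\cap\bar B(0,1)$ (which exists since $\mu_n(\bar B(0,1))\ge 1$) and use the telescoping identity
$$\theta^s_{\mu_n}(B(x_0,2^k))\le\theta^s_{\mu_n}(B(x_0,1))+\sum_{j=0}^{k-1}|\Delta^s_{\mu_n}(x_0,2^j)|.$$
A Chebyshev-type argument on $I_n\to 0$ (averaging over $x_0$ against $\mu_n|_{\bar B(0,1)}$ and smoothing the discrete sum against a small dyadic window of $r$'s) allows one to select $x_0$ so that the right-hand side is uniformly bounded whenever $2^k\le c\,\delta_n^{-1}$. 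This yields uniform $s$-growth $\mu_n(B(0,R))\lesssim R^s$ for each fixed $R>0$ and $n$ large. A diagonal extraction then produces a subsequence $\mu_n\overset{\ast}{\rightharpoonup}\mu$, where $\mu$ is a Radon measure, and Portmanteau gives $\mu(\bar B(0,1))\ge\limsup_n\mu_n(\bar B(0,1))\ge 1$, so $\mu\neq 0$.

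\emph{Step 2: the limit satisfies $\Delta^s_\mu\equiv 0$ on its support.} I expect this to be the main obstacle, because $\mu\mapsto\Delta^s_\mu(x,r)$ is not weak-$*$ continuous ($\partial B(x,r)$ may carry mass). I would bypass this by working with the smoothed differences $\Delta^s_{\mu,\vphi}$ for $\vphi\in C^\infty([0,\infty))$ supported in $[0,2]$ and constant on $[0,\tfrac12]$: then $\Delta^s_{\mu,\vphi}(x,r)$ is jointly continuous in $(x,r)$ and continuous under weak-$*$ convergence of $\mu$. Lemma~\ref{lemconvex} gives, on any fixed box $[a,b]\times B(0,M)\subset(0,\infty)\times\R^d$ eventually contained in the integration region, the domination
$$\iint |\Delta^s_{\mu_n,\vphi}(x,r)|\,d\mu_n(x)\,\frac{dr}{r}\lesssim I_n\longrightarrow 0.$$
Combining continuity of the integrand with weak-$*$ convergence of the product measures $d\mu_n\times\chi_{[a,b]}(r)\tfrac{dr}{r}$ (and a standard lower-semicontinuity/Fatou argument), one deduces $\iint|\Delta^s_{\mu,\vphi}|\,d\mu\,dr/r=0$ on every such box, hence $\Delta^s_{\mu,\vphi}(x,r)=0$ on $\supp(\mu)\times(0,\infty)$ by continuity.

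\emph{Step 3: contradiction via Lemma~\ref{lemfac31}.} To recover vanishing of $\Delta^s_\mu$ itself, either apply Step~2 to a rich family of $\vphi$'s and invert the identity of Remark~\ref{realf}, or observe that the proof of Lemma~\ref{lemfac31} only uses the vanishing of $\Delta^s_{\mu,\vphi}$ for the Gaussian $\vphi(u)=e^{-u^2}$ (again via Remark~\ref{realf}) and adapt that reduction directly. In either case Lemma~\ref{lemfac31} applies to the nonzero Radon measure $\mu$ and produces the desired contradiction. The two technical hurdles are the propagation of mass bounds from $B(0,2)$ to all scales in Step~1 and the transfer of $L^1$-smallness from $(d\mu_n,|\Delta^s_{\mu_n}|)$ to $(d\mu,|\Delta^s_{\mu,\vphi}|)$ under weak-$*$ convergence in Step~2; the specific exponent $\delta^{1/2}$ plays no essential role in the compactness and can be any positive power of $\delta$ that tends to $0$.
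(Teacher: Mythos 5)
Your plan is correct and follows essentially the same route as the paper: a contradiction/compactness argument with uniform mass bounds obtained by telescoping $\Delta^s_{\mu_m}$ and averaging over $x$ and $r$, weak-$*$ extraction, passage to the limit through the smoothed differences $\Delta^s_{\mu,\vphi}$ via Lemma~\ref{lemconvex} and equicontinuity, recovery of $\Delta^s_\mu\equiv 0$ on $\supp\mu$ by letting $\vphi\to\chi_{[0,1]}$, and contradiction with Lemma~\ref{lemfac31}. Only note that of your two options in Step~3 the first is the one the paper uses; the second would need extra care since the proof of Lemma~\ref{lemfac31} uses the vanishing of the unsmoothed $\Delta^s_\mu$ to establish the $s$-AD regularity before passing to the Gaussian.
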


\begin{proof}
By way of contradiction suppose that for each $m\geq 1$ there exists a Radon
 measure $\mu_m$ such that $1\leq \mu_m(\bar B(0,1))\leq\mu_m(B(0,2))\leq 2^{5s+2}$ which satisfies
\begin{equation}\label{eqass32}
\int_{1/m}^{m}
 \int_{x\in  B(0,m)} |\Delta^s_{\mu_m} (x,r)|\,d\mu_m(x)\,\frac{dr}r \leq \frac1{m^{1/2}}.
\end{equation}

We will first show that the sequence $\{\mu_m\}$ has a subsequence $\{\mu_{m_j}\}$ which converges weakly * (i.e. when tested against 
compactly supported continuous functions) to a measure $\mu$.
This follows from \cite[Theorem 1.23]{Mattila-llibre} once we show that $\mu_m$ is uniformly bounded on compact sets. That is,
for any compact $K\subset\R^d$, $\sup_m\mu_m(K)<\infty$. To prove this, 
for $n\geq 4$, $1/4<r<1/2$, and $x\in B(0,1)$, 
we write
\begin{align*}
\frac{\mu_m(B(0,2^{n-3}))}{2^{(n+2)s}}  \leq
\frac{\mu_m(B(x,2^nr))}{(2^nr)^s}& \leq \sum_{k=1}^n |\Delta^s_{\mu_m}(x,2^{k-1}r)| + \frac{\mu_m(B(x,r))}{r^s}\\
& \leq \sum_{k=1}^n |\Delta^s_{\mu_m}(x,2^{k-1}r)| + 4^s\,\mu_m(B(0,2)).
\end{align*}
Integrating this estimate with respect to $\mu_m$ on $B(0,1)$ and with respect to $r\in[1/4,1/2]$, 
using \rf{eqass32} for $m$ big enough
we obtain
\begin{align*}
\mu_m(B(0,2^{n-3})) &\leq 2^{(n+2)s} \left[\sum_{k=1}^n \frac1{\log2}\int_{1/4}^{1/2}\!\!\int_{B(0,1)}
\!\!|\Delta^s_{\mu_m}(x,2^{k-1}r)|
d\mu_m(x) \frac{dr}r + 4^s\mu_m(B(0,2))\right]\\
&\leq c(n),
\end{align*}
which proves the uniform boundedness of $\mu_m$ on compact sets.

Our next objective consists in proving that that $\Delta^s_\mu(x,r)=0$ for all $x\in\supp(\mu)$ and all $r>0$. Once this is done, the lemma would follow from
Lemma \ref{lemfac31} since it is easy to check that $\mu(\bar B(0,1))\geq1$, and thus $\mu$ is not identically zero.

To prove that $\Delta^s_\mu(x,r)$ vanishes identically on $\supp\mu$ for all $r>0$, we will show first that, given any $\CC^\infty$ function $\vphi:[0,\infty)\to\R$ which is supported in 
$[0,2]$ and constant in $[0,1/2]$, we have
\begin{equation}\label{eqlimit}
\int_{0}^\infty\!
 \int_{x\in\R^d} |\Delta^s_{\mu,\vphi} (x,r)|\,d\mu(x)\,\frac{dr}r=0.
 \end{equation}
The proof of this fact is elementary.  
Fix  $m_0$ and 
 let $\eta > 0$. 
 Set  $K = [2/m_0,\,m_0/2] \times \bar B(0,m_0)$.  
Now $\{y \to  \vphi_t(|x-y|)- \vphi_{2t}(|x-y|),~ (t,x) \in K\}$ is an equicontinuous family of continuous functions supported inside a fixed compact set. Hence setting, $\phi(x)=\vphi(|x|), \, x \in \R^d$,  we get that $(\phi_t- \phi_{2t}) * \mu_{m_j}(x)$ converges to $ (\phi_t- \phi_{2t}) * \mu(x)$ uniformly on $K$.  It therefore follows that 
\begin{equation*} \begin{split}
\iint_K  |\Delta^s_{\mu,\vphi} (x,t)|d\mu(x)\frac{dt}t&=\iint_K  |(\phi_t- \phi_{2t})  *\mu(x)|d\mu(x)\frac{dt}t \\
& = \lim_j \int_{2/m_0}^{m_0/2}
 \int_{x\in \bar B(0,m_0)} |(\phi_t- \phi_{2t}) *\mu_{m_j}(x)|d\mu_{m_j}(x)\frac{dt}t \\
 &= 
 \lim_j \int_{x\in \bar B(0,m_0)}  \int_{2/m_0}^{m_0/2}|\Delta^s_{\mu_{m_j},\vphi} (x,t)|\frac{dt}t\,d\mu_{m_j}(x) \\
 &\lesssim \lim_j 
 \int_{x\in \bar B(0,m_0)}\int_{1/m_0}^{m_0} |\Delta^s_{\mu_{m_j}} (x,t)|d\mu_{m_j}(x)\frac{dt}t=0
 \end{split}
\end{equation*}
by Lemma \ref{lemconvex} and \rf{eqass32}.
Since  this holds for any $m_0\geq 1$, our claim \eqref{eqlimit}  is proved.

Denote by $G$ the subset of those points $x\in\supp(\mu)$ such that
$$\int_{0}^\infty\!
 | \Delta^s_{\mu,\vphi} (x,r)|\,\frac{dr}r=0.$$
It is clear now that $G$ has full $\mu$-measure. By continuity, it follows that $\Delta^s_{\mu,\vphi} (x,r)=0$ for all $x\in
\supp\mu$ and all $r>0$. Finally, by taking a suitable sequence of $\CC^\infty$ functions $\vphi_k$ which
converge to $\chi_{[0,1]}$ we infer that $\Delta^s_{\mu} (x,r)=0$ for all $x\in\supp\mu$ and $r>0$. By Lemma \ref{lemfac31}, this is impossible.
\end{proof}

By renormalizing the preceding lemma we get:

\begin{lemma}\label{lemcompactbol}
Let $s$ be a positive and non-integer real number. There exists some $\delta>0$ such that for every Radon measure $\mu$ in $\R^d$ and every open
ball of radius $r_0$ such that
 $0<\mu(\bar B_0)\leq\mu(2B_0)\leq 2^{5s+2}\,\mu(\bar B_0),$ the following estimate holds
$$\int_{\delta\,r_0}^{\delta^{-1}\,r_0}\!\!
 \int_{x\in \delta^{-1} B_0} |\Delta^s_{\mu} (x,r)|\,d\mu(x)\,\frac{dr}r \geq \delta^{1/2}\,\frac{\mu(\bar B_0)^2}{r_0^s}.$$
\end{lemma}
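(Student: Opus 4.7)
The plan is to reduce to Lemma \ref{blowup} by a translation and dilation (renormalization) argument. Write $B_0 = B(x_0,r_0)$ and consider the affine map $T:\R^d\to\R^d$, $T(y)=r_0\,y+x_0$, which sends $\bar B(0,1)$ to $\bar B_0$, $B(0,2)$ to $2B_0$, and $B(0,\delta^{-1})$ to $\delta^{-1}B_0$. Define the rescaled measure
$$\tilde\mu(E):=\frac{\mu(T(E))}{\mu(\bar B_0)}.$$

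First I would verify that $\tilde\mu$ satisfies the hypotheses of Lemma \ref{blowup}. By construction, $\tilde\mu(\bar B(0,1))=\mu(\bar B_0)/\mu(\bar B_0)=1$, and $\tilde\mu(B(0,2))=\mu(2B_0)/\mu(\bar B_0)\leq 2^{5s+2}$ by hypothesis, so the normalization in Lemma \ref{blowup} is met. Therefore, for the $\delta$ produced by that lemma,
$$\int_{\delta}^{\delta^{-1}}\!\!\int_{x\in B(0,\delta^{-1})}|\Delta^s_{\tilde\mu}(x,r)|\,d\tilde\mu(x)\,\frac{dr}{r}\geq \delta^{1/2}.$$

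Next I would track how $\Delta^s_\mu$ transforms under $T$. Since $T(B(x,r))=B(T(x),r_0 r)$,
$$\frac{\tilde\mu(B(x,r))}{r^s}=\frac{\mu(B(T(x),r_0 r))}{\mu(\bar B_0)\,r^s}=\frac{r_0^s}{\mu(\bar B_0)}\,\theta^s_\mu(B(T(x),r_0 r)),$$
and subtracting the analogous identity at scale $2r$ gives
$$\Delta^s_{\tilde\mu}(x,r)=\frac{r_0^s}{\mu(\bar B_0)}\,\Delta^s_\mu(T(x),r_0 r).$$
Substituting into the previous display and performing the change of variables $y=T(x)$ in the $d\tilde\mu(x)=\mu(\bar B_0)^{-1}\,d\mu(y)$ integration and $u=r_0 r$ (which preserves $du/u=dr/r$) in the radial variable yields
$$\int_{\delta r_0}^{\delta^{-1} r_0}\!\!\int_{\delta^{-1}B_0}|\Delta^s_\mu(y,u)|\,d\mu(y)\,\frac{du}{u}\geq \delta^{1/2}\,\frac{\mu(\bar B_0)^2}{r_0^s},$$
which is precisely the stated inequality.

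There is essentially no obstacle here: the argument is entirely a bookkeeping exercise in scaling. The only things that require care are (i) checking that $T$ maps the relevant regions correctly so that the hypotheses and conclusion of Lemma \ref{blowup} translate cleanly, and (ii) keeping track of the factors of $r_0^s$ and $\mu(\bar B_0)$ in the homogeneity of $\Delta^s$. Both are straightforward.
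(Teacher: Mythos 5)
Your proof is correct and follows exactly the paper's argument: the paper also renormalizes via an affine map sending $\bar B_0$ to $\bar B(0,1)$, considers the pushforward measure normalized by $\mu(\bar B_0)$, and applies Lemma \ref{blowup}. Your computation of how $\Delta^s_\mu$ and the integration domains rescale, including the factor $r_0^s/\mu(\bar B_0)$, is the same bookkeeping the paper leaves implicit.
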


\begin{proof}
Let $T:\R^d\to\R^d$ be an affine transformation which maps $\bar B_0$ to $\bar B(0,1)$. Consider the measure
$\sigma=\frac1{\mu(\bar B_0)}\,T_{\#}\mu$, where as usual $T_{\#}\mu(E):= \mu (T^{-1}(E))$, and apply the preceding lemma to $\sigma$.
\end{proof}

We can now complete the proof of Proposition \ref{lemcpt1}.

\begin{proof}[Proof of Proposition \ref{lemcpt1}] Let $B_0$ be an open ball of radius $r_0$ such that $\mu (B_0)>0$. 
Let $\delta\in (0,1)$ to be fixed below and let $k=k(\delta)$ be such that $2^{-k}\leq\delta< 2^{-k+1}$. If
\begin{equation*}
\int_{2^{-k-2}r_0}^{2^{k+2}r_0}\int_{2^{k+2}B_0} \Delta^s_{\mu} (x,r)^2 d \mu(x) \frac{dr}r > \delta^4 \,\theta^s_\mu({B_0})^2\, \mu({B_0})
\end{equation*}
we are done. Otherwise, there exists some $x \in B_0$ such that
\begin{equation}
\label{che1}
\int_{2^{-k-2}r_0}^{2^{k+2}r_0}\Delta^s_{\mu} (x,r)^2 \frac{dr}r \leq 2 \delta^4 \,\theta^s_\mu({B_0})^2.
\end{equation}
Notice also that after changing variables, for any $ n\in \Z$, we have
$$\int_{{r_0} /2}^{r_0}\Delta^s_{\mu} (x,2^n r)^2 \frac{dr}r =\int_{2^{n-1}r_0 }^{2^n r_0}\Delta^s_{\mu} (x,r)^2 \frac{dr}r.$$
Therefore 
\begin{equation}
\label{chv}
\int_{2^{-k-2}r_0}^{2^{k+2}r_0}\Delta^s_{\mu} (x,r)^2 \frac{dr}r=\int_{{r_0} /2}^{r_0} \sum_{n=-k-1}^{k+2}\Delta^s_{\mu}  (x,2^n r)^2 \frac{dr}r.
\end{equation}
Using \eqref{che1}, \eqref{chv} and applying Chebyshev's inequality  with respect to the measure $dt/t$ we find some $t \in [r_0/2,r_0]$ such that
\begin{equation*}
\sum_{n=-k-1}^{k+2}\Delta^s_{\mu} (x,2^n t)^2 \leq \frac{4 \delta^4}{\log 2}\, \theta^s_\mu({B_0})^2.
\end{equation*}
In particular,
\begin{equation*}
|\Delta^s_{\mu} (x,2^n t)| \leq \frac{2 \delta^2}{\sqrt{\log 2}}\, \theta^s_\mu({B_0})
\end{equation*}
for $n=-k-1, \dots, k+2$. This implies that
\begin{equation*}
\left| \frac{\mu(B(x,2^{k+3}t))}{(2^{k+3}t)^s}-\frac{\mu(B(x,t))}{t^s} \right| \leq 4(k+2) (2^{-k+1})^2 \,\theta^s_\mu({B_0})\leq \theta^s_\mu({B_0}).
\end{equation*}
Therefore,
\begin{equation*}
\frac{\mu(2 \delta^{-1}{B_0})}{(2 \delta^{-1}r_0)^s}\leq 2^{3s}\frac{\mu(B(x,2^{k+3}t))}{(2^{k+3}t)^s}\leq 2^{3s}\left( \frac{\mu(B(x,t))}{t^s}+2^s\, \theta^s_\mu(2B_0) \right) \leq 2^{5s+1}\theta^s_\mu(2B_0),
\end{equation*}
and so
\begin{equation}
\label{ddoubl}
\mu(2 \delta^{-1}{B_0})) \leq 2^{5s+1} \, \delta^{-s}\mu(2B_0).
\end{equation}
In the same way (in fact, just setting $\delta=1/2$) one easily deduces that
\begin{equation*}
\mu(4 {B_0}) \leq 2^{5s+2}\mu(2B_0).
\end{equation*}
Therefore we can apply Lemma \ref{lemcompactbol} to $2{B_0}$ and obtain
\begin{equation}
\label{2b0}
\int_{2\delta\,r_0}^{2\delta^{-1}\,r_0}\!\!
 \int_{x\in 2\delta^{-1} B_0} |\Delta^s_{\mu} (x,r)|\,d\mu(x)\,\frac{dr}r > \delta^{1/2}\,\frac{\mu(2B_0)^2}{(2r_0)^s}.
 \end{equation}
By Cauchy-Schwartz and \eqref{2b0}, it follows that
\begin{equation*}
\mu(2 \delta^{-1}B_0)\, \log(\delta^{-2})\,\int_{2\delta\,r_0}^{2\delta^{-1}\,r_0}\!\!
 \int_{x\in 2\delta^{-1} B_0} \Delta^s_{\mu} (x,r)^2\,d\mu(x)\,\frac{dr}r  >  \delta\,\frac{\mu(2 {B_0})^4}{(2r_0)^{2s}}.
 \end{equation*}
Finally using \eqref{ddoubl} we have,
$$\int_{2\delta\,r_0}^{2\delta^{-1}\,r_0}\!\!
 \int_{x\in 2\delta^{-1} B_0} \Delta^s_{\mu} (x,r)^2\,d\mu(x)\,\frac{dr}r \gtrsim \frac{\delta^{s+1}}{\log(\delta^{-2})}\, \theta^s_\mu({B_0})^2 \, \mu({B_0}).$$
\end{proof}


\section{Relationship with the $s$-Riesz transform for $0<s<1$}\label{secriesz}

It was shown in \cite{mpv} that for a {\em finite}  Radon measure $\mu$ in $\R^d$, we have
\begin{equation}\label{eqmpv}
\sup_{\ve>0} \int|R_\ve^s(\mu)(x)|^2d\mu(x)\approx \int \wolf(x)d\mu(x).
\end{equation}
In this section we extend this result to the case of non-finite Radon measures. In part, our motivation stems from the counterexample
that we will construct in Section \ref{counterexample} for the case $s=1$, 
which consists of a non-finite Radon measure for which the squared $L^2(\mu)$-norm of 
the $1$-Riesz transform of $\mu$ is not comparable to 
$\iint_0^\infty\Delta^1_\mu (x,r)^2\,\frac{dr}rd\mu(x)$.

The next proposition is stated in terms of the doubly truncated Riesz transform of $\mu$. 
Given $0<\ve_1<\ve_2$, this is defined as
$$\riesz(\mu)(x)= \int_{\ve_1<|y-x|\leq \ve_2} K^s(y-x)\,d\mu(y).$$

\begin{propo}\label{comparability}
Let $\mu$ be a Radon measure in $\R^d$ and $0<s<1$. Then the following statements hold:
\begin{enumerate}
\item[(a)] For every $\ve_1,\;\ve_2>0$,
\begin{equation}\label{rieszwolff}
\int|\riesz(\mu)(x)|^2d\mu(x)\le C\int \wolf(x)d\mu(x),
\end{equation}
with $C$ independent of $\ve_1$ and $\ve_2$.
\item[(b)] If $\mu$ is such that $\underset{r\to\infty}{\liminf}\frac{\;\mu(B(0,r))^3}{r^{2s}}<\infty$, then
\begin{equation}\label{wolffriesz}
\int\wolf(x)d\mu(x)\leq C\sup_{\ve_2>\ve_1>0}\int|\riesz(\mu)(x)|^2d\mu(x).
\end{equation}
\end{enumerate} 

\end{propo}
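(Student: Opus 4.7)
My plan is to reduce both parts to the finite-measure equivalence \eqref{eqmpv} of \cite{mpv} by approximating $\mu$ with the finite truncations $\mu_R := \mu\rest B(0,R)$ and letting $R\to\infty$. The key observation is that for any $0<\ve_1<\ve_2$ and $x \in B(0,R-\ve_2)$, the doubly truncated Riesz transform $\riesz(\mu)(x)$ depends only on the values of $\mu$ on $B(x,\ve_2)\subset B(0,R)$, so $\riesz(\mu)(x) = \riesz(\mu_R)(x)$ on that set. For part (a), this together with the triangle inequality $|\riesz|\le|R^s_{\ve_1}|+|R^s_{\ve_2}|$ and the monotonicity $\mu_R\le\mu$ gives
\[
\int_{B(0,R-\ve_2)}|\riesz(\mu)|^2\,d\mu\;\le\;\int|\riesz(\mu_R)|^2\,d\mu_R\;\le\;4\sup_{\ve>0}\int|R^s_\ve(\mu_R)|^2\,d\mu_R\;\le\;C\int\wolf\,d\mu,
\]
the last inequality by \eqref{eqmpv} applied to the finite measure $\mu_R$. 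Sending $R\to\infty$ and applying monotone convergence yields (a).

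For part (b) we may assume $I:=\sup_{\ve_2>\ve_1>0}\int|\riesz(\mu)|^2\,d\mu$ is finite. Along a sequence $R_k\to\infty$ provided by the growth hypothesis (after a harmless dilation, so that $\mu(B(0,3R_k))^3/R_k^{2s}\le M$), we apply \eqref{eqmpv} to the finite measure $\mu_{R_k}$ and convert the single truncation supremum to a double truncation supremum by Fatou's lemma; writing $\riesz(\mu_{R_k}) = \riesz(\mu) - \riesz(\mu\rest B(0,R_k)^c)$ and using $(a+b)^2\le 2a^2+2b^2$ produces
\[
\int\dot{W}^{\mu_{R_k}}_{\frac23(d-s),\frac32}\,d\mu_{R_k}\;\le\; 2CI + 2CT_k,\qquad T_k:=\sup_{\ve_1,\ve_2}\int_{B(0,R_k)}\bigl|\riesz\bigl(\mu\rest B(0,R_k)^c\bigr)\bigr|^2 d\mu.
\]
The left hand side tends to $\int\wolf\,d\mu$ as $R_k\to\infty$ by monotone convergence, so the task reduces to bounding $T_k$.

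The main obstacle is the estimate for $T_k$. Its interior part $|x|\le R_k/2$ is easy: every $y$ with $|y|\ge R_k$ satisfies $|y-x|\ge R_k/2$, yielding a pointwise bound of $(R_k/2)^{-s}\mu(B(0,3R_k))$ and a contribution of order $\mu(B(0,3R_k))^3/R_k^{2s}\le M$. The outer shell $R_k/2<|x|\le R_k$, where $|y-x|$ can be as small as $\ve_1$, is trickier; I plan to handle it by a Cauchy--Schwarz estimate combined with an averaging/pigeonhole argument in $R\in[R_k,2R_k]$ that lets us select a radius on which $\mu$ is not concentrated in a thin annular shell near $\partial B(0,R_k)$, again producing a contribution of order $M$. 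Combining gives $\int\wolf\,d\mu\le C(I+M)$, and in particular $\int\wolf\,d\mu<\infty$; the elementary lower bound $\int\wolf\,d\mu\gtrsim\int_0^\infty\mu(B(0,r))^3\,r^{-2s-1}\,dr$ (obtained by restricting $x$ to $B(0,r/2)$ in the standard triple-integral representation of the Wolff energy) then forces $\liminf_{r\to\infty}\mu(B(0,r))^3/r^{2s}=0$, so the whole argument can be rerun with $M$ arbitrarily small, yielding the desired comparability $\int\wolf\,d\mu\le CI$.
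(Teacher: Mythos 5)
Your part (a) is correct and is essentially the paper's own argument: both reduce to the finite-measure result \eqref{eqmpv} applied to $\chi_{B(0,R)}\mu$ and let $R\to\infty$ (the paper via dominated convergence, you via the exact localization $\riesz(\mu)=\riesz(\mu\rest B(0,R))$ on $B(0,R-\ve_2)$); either route works. Part (b), however, has a genuine gap, and it sits exactly where you flag ``the main obstacle''. After applying \eqref{eqmpv} to $\mu_{R_k}$ you enlarge the single-truncation supremum to a supremum over \emph{all} $0<\ve_1<\ve_2$, so your error term $T_k$ sees the whole tail of $\mu$: for $\ve_2\gg R_k$ the operator $R^s_{\ve_1,\ve_2}(\mu\rest B(0,R_k)^c)(x)$ integrates over all $y$ with $|y|\le|x|+\ve_2$, and the hypothesis only bounds $\mu(B(0,r))$ along the sparse sequence $r_k$ --- between consecutive $r_k$'s the growth of $\mu(B(0,\rho))/\rho^{s}$ is completely uncontrolled, so $T_k$ may well be infinite. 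In particular your ``easy'' interior bound $(R_k/2)^{-s}\mu(B(0,3R_k))$ is only valid when $\ve_2\lesssim R_k$. The paper's key move is precisely to prevent $\ve_2$ from growing: since $\mu_{R_k}$ is supported in $B(0,R_k)$, for $x\in B(0,R_k)$ one has $R^s_{\ve_1}(\mu_{R_k})(x)=R^s_{\ve_1,2R_k}(\mu_{R_k})(x)$, so \eqref{eqmpv} is exploited with $\ve_2=2R_k$ fixed throughout, and the resulting error term only involves $\mu$ on $B(0,3R_k)\setminus B(0,R_k)$.

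Second, even with $\ve_2=2R_k$, the shell estimate that you leave as a plan is the real content of (b), and a single-scale pigeonhole (``a radius on which $\mu$ is not concentrated in a thin annular shell'') does not suffice: for $x$ near $\partial B(0,R)$ one must control $\int_{B(0,3R)\setminus B(0,R)}|y-x|^{-s}d\mu(y)$ at \emph{all} dyadic scales $2^{-k}R$ simultaneously (think of $\mu$ charging a point of the sphere). The paper selects, via Lemma \ref{lemthin}, a comparable radius whose ball has $t$-thin boundary, i.e.\ the mass within distance $\lambda R$ of the sphere is $\lesssim\lambda\,\mu(B(0,2R))$ for every $\lambda$; this gives $\mu\bigl(B(x,2^{-k}R)\cap B(0,3R)\setminus B(0,R)\bigr)\lesssim 2^{-k}\mu(B(0,3R))$ for all $k$, and summing $\sum_k 2^{-k(1-s)}$ (this is where $0<s<1$ enters, besides in \eqref{eqmpv} itself) yields the pointwise bound $|R^s_{\ve_1,2R}(\chi_{B(0,R)^c}\mu)(x)|\lesssim\theta^s_\mu(B(0,3R))$ for every $x\in B(0,R)$, interior and shell alike, whence the error term is $\lesssim\theta^s_\mu(B(0,3R))^2\mu(B(0,3R))\lesssim M$. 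Your concluding bootstrap (finiteness of the Wolff energy forces $\lim_{r\to\infty}\mu(B(0,r))^3/r^{2s}=0$, so one may rerun the argument with $M$ arbitrarily small) is correct and matches the paper's use of \eqref{eqww1}, but it rests on the unproved bound for $T_k$; as written, the proof of (b) is incomplete.
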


\begin{remark}
First, let us mention that it is easy to see that there exist non-finite measures $\mu$ with finite Wolff energy. 

Second, notice that in general \eqref{wolffriesz} does not hold without assuming
the finiteness of $\underset{r\to\infty}{\liminf}\frac{\;\mu(B(0,r))^3}{r^{2s}}$. Take for example 
 $\mu=\HH^1 |_{\R}$, then by antisymmetry one has $\int|\riesz(\mu)|^2d\mu=0$, while $\int\wolf d\mu=\infty$.

On the other hand, it is easy to check that 
\begin{equation}\label{eqww1}
\int\wolf(x)d\mu(x)<\infty \qquad\Rightarrow\; \quad \;\underset{r\to\infty}{\lim}\frac{\;\mu(B(0,r))^3}{r^{2s}}=0.
\end{equation}
 
 
\end{remark}

By combining Proposition \ref{comparability} and Theorem \ref{teomain1} we get the following corollary

\begin{coro}\label{corol}
Let $\mu$ be a Radon measure in $\R^d$ and let $0<s<1$. If $\mu$ is such that $\underset{r\to\infty}{\liminf}\frac{\;\mu(B(0,r))^3}{r^{2s}}<\infty$, 
then 
\begin{equation*}
\sup_{\ve_1,\ve_2>0}\int|\riesz(\mu)(x)|^2d\mu(x)\approx\int\wolf(x)d\mu(x)\approx \iint_0^\infty\Delta_\mu^s(x,r)^2\frac{dr}{r}d\mu(x).
\end{equation*}
\end{coro}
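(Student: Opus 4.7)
The plan is to obtain the corollary as a direct combination of Theorem \ref{teomain1} and Proposition \ref{comparability}, with no further work beyond chaining the estimates. First, Theorem \ref{teomain1} applies to any Radon measure $\mu$ on $\R^d$ with non-integer $0<s<d$ (so in particular for $0<s<1$), and it already delivers the second equivalence
$$\int\wolf(x)\,d\mu(x) \;\approx\; \iint_0^\infty \Delta^s_\mu(x,r)^2\,\frac{dr}{r}\,d\mu(x),$$
without requiring the growth hypothesis $\liminf_{r\to\infty}\mu(B(0,r))^3/r^{2s}<\infty$.

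Next, for the first equivalence, I would handle the two inequalities separately. The upper bound
$$\sup_{\ve_2>\ve_1>0}\int |\riesz(\mu)(x)|^2\,d\mu(x)\;\leq\; C\int \wolf(x)\,d\mu(x)$$
is immediate from Proposition \ref{comparability}(a): the constant there is independent of $\ve_1,\ve_2$, so taking the supremum on the left-hand side causes no issue. The reverse bound
$$\int \wolf(x)\,d\mu(x)\;\leq\; C\sup_{\ve_2>\ve_1>0}\int |\riesz(\mu)(x)|^2\,d\mu(x)$$
is precisely Proposition \ref{comparability}(b), whose sole hypothesis is the assumed finiteness of $\liminf_{r\to\infty}\mu(B(0,r))^3/r^{2s}$, which is available by the standing assumption of the corollary.

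Combining these two estimates produces
$$\sup_{\ve_1,\ve_2>0}\int|\riesz(\mu)(x)|^2\,d\mu(x)\;\approx\;\int\wolf(x)\,d\mu(x),$$
and chaining this with the equivalence provided by Theorem \ref{teomain1} yields the three-way comparability claimed. Since everything reduces to quoting previously established statements, there is no genuine obstacle here; the only point to keep in mind is that the liminf hypothesis is used exactly once, namely to invoke Proposition \ref{comparability}(b), while both Theorem \ref{teomain1} and Proposition \ref{comparability}(a) are unconditional in this range of $s$.
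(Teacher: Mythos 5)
Your proposal is correct and is exactly the argument the paper intends: the corollary is stated there as an immediate consequence of combining Proposition \ref{comparability} (part (a) for the upper bound, part (b) --- the only place the liminf hypothesis is used --- for the lower bound) with Theorem \ref{teomain1}, which applies since $0<s<1$ is non-integer. Your observation that the second equivalence holds unconditionally and that the constant in (a) is uniform in $\ve_1,\ve_2$ matches the paper's reasoning.
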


Before proving the proposition we need to recall the definition of balls with thin boundaries. Given $t>0$,
a ball $B(x,r)$ is said to have  $t$-thin boundary (or just thin boundary)
if
$$\mu\bigl(\{y\in B(x,2r):\dist(y,\partial B(x,r))\leq \lambda\,r\}\bigr) \leq t\,\lambda\,\mu(B(x,2r))$$
for all $\lambda>0$.
The following result is well known. For the proof (with cubes instead of balls) see Lemma 9.43 of \cite{Tolsa-llibre}, for
example.

\vv
\begin{lemma}\label{lemthin}
Let $\mu$ be a Radon measure on $\R^d$. Let $t$ be some constant big enough (depending only on $d$).
Let $B(x,r)\subset\R^d$ be any fixed ball. Then there exists $r'\in [r,2r]$ such that the ball $B(x,r')$ has $t$-thin boundary.
\end{lemma}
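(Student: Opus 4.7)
The strategy is a Vitali covering argument combined with a dyadic pigeonhole over the scale parameter $\lambda$.

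First, I would reduce to dyadic scales. Writing $A_\lambda(r') := \{y : r'(1-\lambda)\le |y-x|\le r'(1+\lambda)\}$, the set appearing in the definition of thin boundary agrees with $A_\lambda(r')$ for $\lambda\in(0,1)$. Since $A_\lambda(r')\subset A_{2\lambda}(r')$, if the dyadic inequality $\mu(A_{2^{-k}}(r'))\le t'\,2^{-k}\,\mu(B(x,2r'))$ holds for every integer $k\ge 0$, then the full thin boundary property holds with constant $2t'$. Moreover for $k\le\log_2 t'$ the dyadic condition is automatic from the trivial bound $\mu(A_{2^{-k}}(r'))\le\mu(B(x,2r'))$, so only scales $k>\log_2 t'$ can cause trouble.

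Next, for each $r'\in[r,2r]$ at which thin boundary fails, I would pick a witness $\lambda(r')<1/t'$ and form the radial interval $I(r') = (r'(1-\lambda(r')),\,r'(1+\lambda(r')))\subset(0,4r)$. By Vitali's covering lemma in $\R$, extract a disjoint subfamily $\{I(r'_j)\}$ whose five-fold dilates cover the bad set. The key geometric observation is that disjointness of the $I(r'_j)$ on the radial axis implies disjointness of the corresponding spherical shells $A_{\lambda_j}(r'_j)$ in $\R^d$, since the latter are radial cylinders over the former. Hence $\sum_j\mu(A_{\lambda_j}(r'_j))\le\mu(B(x,4r))$. Combining with the quantitative failure condition $\mu(A_{\lambda_j}(r'_j))>t'\lambda_j\mu(B(x,2r'_j))$ yields $\sum_j\lambda_j\mu(B(x,2r'_j))<t'^{-1}\mu(B(x,4r))$, which, since $|I(r'_j)|=2\lambda_j r'_j\le 4r\lambda_j$, controls the total length of the covering, and hence of the bad set.

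The main obstacle is that $\mu(B(x,2r'_j))$ can be much smaller than $\mu(B(x,4r))$ for non-doubling $\mu$, which would naively force $t$ to depend on the doubling ratio $\mu(B(x,4r))/\mu(B(x,2r))$ rather than on $d$ alone. To eliminate this dependence I would partition the bad radii into dyadic level sets according to the size of $\mu(B(x,2r'))$, apply the Vitali argument separately within each level (where the relevant masses are comparable, so the previous estimate contributes $\lesssim r/t'$), and sum over the levels. Because the level sets are nested intervals in $[r,2r]$, the inner-level estimates combine into a geometric series whose total is bounded by $Cr/t'$ with $C$ depending only on $d$. Taking $t'$ (hence $t=2t'$) large enough in terms of $d$ makes the total bad length less than $r$, producing the required $r'\in[r,2r]$. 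This is the technical heart of the proof and mirrors, with the radial foliation replacing coordinate projections, the argument given in Lemma~9.43 of \cite{Tolsa-llibre} for the cube version.
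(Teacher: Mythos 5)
Your reduction to dyadic $\lambda$, the passage to radial intervals, and the global Vitali step are all sound, and you correctly identify the non-doubling ratio $\mu(B(x,4r))/\mu(B(x,2r))$ as the obstacle. The gap is exactly at the step you call the technical heart: the summation over the dyadic level sets of $r'\mapsto\mu(B(x,2r'))$. These level sets are not nested; since $r'\mapsto\mu(B(x,2r'))$ is nondecreasing, they are consecutive disjoint subintervals $E_0,E_1,\dots$ of $[r,2r]$, and the number of nonempty ones is of order $\log_2\bigl(\mu(B(x,4r))/\mu(B(x,2r))\bigr)$, which is not bounded in terms of $d$. Within a level $E_n$ your Vitali estimate gives disjoint witness annuli with $\mu(A_j)>t'\lambda_j\,\mu(B(x,2r'_j))\gtrsim t'\lambda_j\,2^{n}\mu(B(x,2r))$ and $\sum_j\mu(A_j)\lesssim 2^{n}\mu(B(x,2r))$, hence $\sum_j\lambda_j\lesssim 1/t'$ and a bad length $\lesssim r/t'$ \emph{per level, uniformly in $n$, with no decay whatsoever}. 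Nothing in the argument produces a geometric series, and the per-level bounds by themselves cannot exclude a bad set of total length comparable to $r\cdot(\text{number of levels})/t'$, i.e.\ conceivably all of $[r,2r]$. Ruling that out requires cross-level information which your scheme does not record, so as written the proof does not close.

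The missing observation, which also makes the level decomposition unnecessary: if $\lambda<1/t'$ and $r'\le 2r(1-2/t')$, then the witness annulus $\{y:(1-\lambda)r'\le|y-x|\le(1+\lambda)r'\}$ is contained in $B(x,2r)$, because $(1+\lambda)r'\le\bigl(1+\tfrac1{t'}\bigr)\bigl(1-\tfrac2{t'}\bigr)2r<2r$. So discard outright the radii $r'\in\bigl(2r(1-2/t'),\,2r\bigr]$, a set of length at most $4r/t'$, and run your global Vitali argument on the remaining bad radii: all the disjoint witness annuli now lie inside $B(x,2r)$, so $\sum_j\mu(A_j)\le\mu(B(x,2r))\le\mu(B(x,2r'_j))$ for every $j$, whence $\sum_j\lambda_j<1/t'$ and the bad set has length at most $20r/t'+4r/t'<r$ once $t'$ exceeds an absolute constant (the degenerate case $\mu(B(x,2r))=0$ is immediate, since then no annulus contained in $B(x,2r)$ can witness badness). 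Note also that the paper itself gives no proof but cites the cube version, Lemma 9.43 of \cite{Tolsa-llibre}; a corrected argument along the lines just indicated would give a short self-contained proof for balls, with $t$ an absolute constant.
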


Now we turn to the proof of Proposition \ref{comparability}.

\begin{proof}[Proof of Proposition \ref{comparability}]
 If $\mu$ is a compactly supported Radon measure and $0<s<1$, then by \cite{mpv}, for any $r>0$,
 \begin{equation*}
 \int_{B(0,r)}|\riesze(\chi_{B(0,r)}\mu )(x)|^2d\mu(x)\lesssim\int\wolf(x)d\mu(x)
 \end{equation*}
 for all $\ve_1>0$ and $r>0$. Therefore,
 $$\int_{B(0,r_0)}|\riesz(\chi_{B(0,r)}\mu )(x)|^2d\mu(x)\lesssim\int\wolf(x)d\mu(x)$$
 for any $r_0<r$ and $\ve_1,\ve_2>0$.
 Since $$\lim_{r\to\infty}|\riesz(\chi_{B(0,r)}\mu )(x)|=|\riesz(\mu)(x)|$$ and for a fixed $r_0>0$ and $x\in B(0,r_0),$ 
 $$|\riesz(\mu)(x)|\le\frac{\mu(B(0,r_0+\ve_2))}{\ve_1^s}=C_{r_0,\ve_1,\ve_2},$$ the dominated convergence theorem proves \eqref{rieszwolff}.
 
 Now we deal with inequality \eqref{wolffriesz}.  
 Clearly we may assume that 
 $$\sup_{\ve_1,\ve_2}\int|\riesz(\mu)(x)|^2d\mu(x)<\infty,$$
since otherwise the statement (b) is trivial.
 Using \cite{mpv}, given $r>0$ and taking $\ve_2 =2r$ we get 
 \begin{equation}\label{eqav1}
 \begin{split}
 \int_{B(0,r)}\wolfr(x)\,d\mu(x)&\lesssim\sup_{\ve_1>0}\int_{B(0,r)}|\riesz(\chi_{B(0,r)}\mu )(x)|^2d\mu(x)\\
 &\lesssim \sup_{\ve_1>0}\int|\riesz(\mu)(x)|^2d\mu(x)\\
 &\quad +\sup_{\ve_1>0}\int_{B(0,r)}|\riesz(\chi_{B(0,r)^c}\mu)(x)|^2d\mu(x).
 \end{split}
 \end{equation}
 
 We claim that if $B(0,r)$ has thin boundary, then
\begin{equation}\label{eqclaimm}
 |\riesz(\chi_{B(0,r)^c}\mu)(x)|\lesssim \theta_\mu^s(B(0,3r))
 \qquad \mbox{for $\ve_2=2r$ and $x\in B(0,r)$.}
\end{equation}
Assuming this for the moment, we get
$$
\int_{B(0,r)}|\riesz(\chi_{B(0,r)^c}\mu)(x)|^2\,d\mu(x)
\lesssim \theta_\mu^s(B(0,3r))^2\,\mu(B(0,r)),
$$
and thus
\begin{equation}\label{eqaa12}
\begin{split}
 \int_{B(0,r)}&\wolfr(x)\,d\mu(x)\lesssim \\ &\sup_{\ve_2>\ve_1>0}\int|\riesz(\mu)(x)|^2d\mu(x) + \theta_\mu^s(B(0,3r))^2\,\mu(B(0,3r)).
\end{split}
\end{equation}
By the assumption in (b), there exists a sequence $r_k\to\infty$ such that
$$\sup_{k>0}\theta_\mu^s(B(0,r_k))^2\,\mu(B(0,r_k))<\infty.$$
By Lemma \ref{lemthin}, for each $k$ there exists some $\wt r_k\in [\frac16r_k,\frac13r_k]$ such that the ball $B(0,\wt r_k)$
has thin boundary. Since
$\theta_\mu^s(B(0,3\wt r_k))^2\,\mu(B(0,3\wt r_k)) \lesssim \theta_\mu^s(B(0,r_k))^2\,\mu(B(0,r_k))$, from \rf{eqaa12}
we deduce
$$ \int_{B(0,\wt r_k)}\wolfrk(x)\,d\mu(x)\lesssim \sup_{\ve_2>\ve_1>0}\int|\riesz(\mu)(x)|^2d\mu(x) + 
\theta_\mu^s(B(0,r_k))^2\,\mu(B(0,r_k)).$$
Letting $k\to\infty$, we obtain
\begin{equation}\label{eqdk99}
\int\wolf(x)d\mu(x)\lesssim \sup_{\ve_2>\ve_1>0}\int|\riesz(\mu)(x)|^2d\mu(x)+\underset{r\to\infty}{\liminf}\frac{\;\mu(B(0,r))^3}{r^{2s}}.
\end{equation}
By the assumptions in (b) the right hand above is finite and thus
 $\int\wolf(x)d\mu(x)<\infty$, which in turn implies that 
$\underset{r\to\infty}{\lim}\frac{\;\mu(B(0,r))^3}{r^{2s}}=0$ by \rf{eqww1}.
Hence the statement (b) of the proposition follows from \rf{eqdk99}.

It remains to prove \rf{eqclaimm}. For $x\in B(0,r)$ and $\ve_2=2r$, we have
\begin{align*}
|\riesz(\chi_{B(0,r)^c}\mu)(x)| &\leq \int_{B(0,3r)\setminus B(0,r)} \frac1{|y-x|^s}\,d\mu(y)\\
&\lesssim \sum_{k\geq 1 } \frac{\mu(B(x,2^{-k}r)\cap B(0,3r)\setminus B(0,r))}{(2^{-k}r)^{s}}.
\end{align*}
Note now that if $B(x,2^{-k}r)\cap B(0,3r)\setminus B(0,r)\neq\varnothing$, then 
$$B(x,2^{-k}r)\cap B(0,3r)\subset  B(0,3r)\cap U_{2^{-k+1}}(\partial B(0,r)),$$ where $U_\delta(A)$ stands for the $\delta$-neighborhood of $A$. Thus, in any case we have
$$\mu(B(x,2^{-k}r)\cap B(0,3r)\setminus B(0,r)) \lesssim 2^{-k}\,\mu(B(0,3r)),$$
because $B(0,r)$ has thin boundary. Therefore,
$$|\riesz(\chi_{B(0,r)^c\mu})(x)|\lesssim \sum_{k\geq 1 }  2^{-k(1-s)} \frac{\mu(B(0,3r))}{r^s} \lesssim \theta_\mu^s(B(0,3r)),$$
as claimed.
\end{proof}

\section{Counterexample in the integer case $s=1$}\label{counterexample}

In this section we give an example of an infinite measure in the plane such that the quantities
\begin{equation}\label{eq333}
\int|R^1(\mu)(x)|^2d\mu(x),\quad\int W^\mu_{\frac23,\frac32}(x)d\mu(x),\quad  \iint_0^\infty\Delta_\mu^1(x,r)^2\frac{dr}{r}d\mu(x)
\end{equation}
are not comparable, in contrast to the result stated in Corollary \ref{corol} for $0<s<1$.
Our example consists of a measure $\mu$ with linear growth (i.e.\ growth $1$), infinite Wolff energy, for which the 
squared  $L^2(\mu)$-norm of the 
$1$-Riesz transform with respect to $\mu$ is finite and much bigger than $\iint_0^\infty\Delta_\mu^1(x,r)^2\frac{dr}{r}d\mu(x)$. 
We think that this fact is quite surprising, because for  
general measures $\mu$ with linear growth (i.e.,\ with growth $1$) in the complex plane, it has been recently shown in \cite{tolsaarxiv} that
$$\int_Q|R^1_\mu\chi_Q|^2d\mu\le C\mu(Q)\qquad \mbox{for every square $Q\subset \C$}$$
if and only if
$$\int_Q\int_0^\infty\Delta_{\chi_Q\mu}^1(x,r)^2\frac{dr}{r}d\mu(x)\le C'\mu(Q)\qquad \mbox{for every square $Q\subset \C$}.$$

Now we turn to the construction of the measure for our counterexample. Consider the curve $\Gamma_\alpha$ as in Figure \ref{fig1}, for $0<\alpha \leq \pi/4$. 
In particular, $\Gamma_\alpha \subset \R^2$ can be realized as the graph of the piecewise linear function $f: \R \ra \R$ defined by 
$$f(x) = \begin{cases} 0 &\mbox{if } x \in (-\infty,-1/2] \cup [1/2, \infty),  \\
\tan \a \,(x+1/2) & \mbox{if } x \in (-1/2,0],
\\
-\tan \a \,(x-1/2) & \mbox{if } x \in (0,1/2). \end{cases}$$
We set \begin{itemize}
\item $L_1^\a=(-\infty, 1/2] \times \{0\}$, 
\item $T_1^\a=\{(x,f(x)) \in \R^2:x \in[-1/2,0]\}$,
\item $T_2^\a=\{(x,f(x)) \in \R^2:x \in[0,1/2]\}$,
\item $L_2^\a=[1/2,\infty)\times \{0\}$
\item $T^\a=T_1^\a \cup T_2^\a$.
\end{itemize}

We will show that, for the $1$-dimensional Hausdorff measure $\mu=\ha^1|_{\Gamma_\a}$, the three quantities in \rf{eq333} are not comparable.
 Note that, strictly speaking, this fact cannot be consider as a counterexample to
Corollary \ref{corol} for the case $s=1$, since the assumption $\underset{r\to\infty}{\liminf}\frac{\;\mu(B(0,r))^3}{r^{2}}<\infty$ does not hold.
\vv

\begin{propo}
 \label{counter0}
 Let $\a \in (0, \pi/4]$. Then 
\begin{equation}\label{counterexample1}
 \iint_0^\infty \Delta^1_{\ha^1|_{\Gamma_\a}}(x,r)^2 \frac{dr}r\, d\ha^1|_{\Gamma_\a}(x) \lesssim \sin^4\a.
\end{equation}
Also,
\begin{equation}\label{counterexample2}
\sup_{\ve_1,\ve_2>0}\int|\rieszz(\mu)(x)|^2d\mu(x)\approx\sin^2\a.
\end{equation}
\end{propo}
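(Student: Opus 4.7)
I propose to split the proof of Proposition~\ref{counter0} into two essentially independent estimates, both exploiting the decomposition $\Gamma_\a=L_1^\a\cup T_1^\a\cup T_2^\a\cup L_2^\a$ and the reflection symmetry of $\Gamma_\a$ about the $y$-axis.

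For \eqref{counterexample1}, the idea is to compare $\mu=\ha^1|_{\Gamma_\a}$ with the straight-line measure $\mu_0=\ha^1|_{\R\times\{0\}}$, for which $\theta^1_{\mu_0}(B(x,r))=2$ and $\Delta^1_{\mu_0}(x,r)=0$ for every $x$ on the $x$-axis. For $x\in L_1^\a$ at distance $d\ge 0$ from the corner $(-1/2,0)$, I would verify three regimes: (a) if $2r\le d$ then $B(x,2r)\subset L_1^\a$, so $\Delta^1_\mu(x,r)=0$; (b) if $r\ge d+1$ then both $B(x,r)$ and $B(x,2r)$ contain all of $T^\a$ together with the corresponding segments of $L_1^\a\cup L_2^\a$, giving $\mu(B(x,r))=2r-1+\sec\a$ and $|\Delta^1_\mu(x,r)|=(\sec\a-1)/(2r)=O(\sin^2\a/r)$; and (c) in the intermediate range $d/2\le r\le 3(d+1)$, a direct elementary computation based on the triangle geometry yields $|\mu(B(x,r))-2r|\lesssim \sin^2\a$, hence $|\Delta^1_\mu(x,r)|\lesssim\sin^2\a/r$. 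Combining (a)--(c) then gives $\int_0^\infty\Delta^1_\mu(x,r)^2\,dr/r\lesssim \sin^4\a/(d+1)^2$ for $x\in L_1^\a$, and a further integration against $\mu|_{L_1^\a}$ produces a contribution of size $\lesssim\sin^4\a$. The analysis on $T_1^\a$ (a set of $\mu$-measure $\sec\a/2\approx 1$) is analogous, and reflection in the $y$-axis handles $T_2^\a\cup L_2^\a$.

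For \eqref{counterexample2}, the key starting observation is that the oddness of $K^1$ combined with the symmetry of the truncation annulus yields $\rieszz\mu_0(x)=0$ for every $x\in\R\times\{0\}$ and every $0<\ve_1<\ve_2$. Hence for $x\in L_1^\a\cup L_2^\a$ one has $\rieszz\mu(x)=\rieszz(\mu-\mu_0)(x)$, where the signed measure $\mu-\mu_0=\ha^1|_{T^\a}-\ha^1|_{[-1/2,1/2]\times\{0\}}$ is compactly supported, has net mass $\sec\a-1=O(\sin^2\a)$, and has vertical first moment $\int y_2\,d(\mu-\mu_0)(y)=\tfrac{1}{4}\sec\a\tan\a\approx\sin\a$. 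For the upper bound, a Taylor expansion of $K^1(y-x)$ about the bump gives $|\rieszz\mu(x)|\lesssim \sin^2\a/d+\sin\a/d^2$ when $x\in L_1^\a$ is at distance $d\ge 1$ from the bump; for $d\le 1$ or $x\in T^\a$, the linear growth of $\mu$ together with the dipole structure of $\mu-\mu_0$ produces the cruder bound $|\rieszz\mu(x)|\lesssim\sin\a$. Squaring and integrating in $x$ then gives $\sup_{\ve_1,\ve_2}\int|\rieszz\mu|^2\,d\mu\lesssim\sin^2\a$. For the lower bound, I would take $\ve_1$ small and $\ve_2$ large, pick $x$ in the interior of $T_1^\a$, and evaluate $\rieszz\mu(x)$ piece by piece via the classical formulas for the Riesz transform of a straight ray viewed from an off-line point: the transverse (perpendicular to $T_1^\a$) component of the sum is $\approx\sin\a$, while the $\ve_2$-dependent tangential divergences from $L_1^\a$ and $L_2^\a$ cancel against each other and against the tangential contribution of $T_2^\a$ up to an $O(\sin^2\a)$ error. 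Hence $|\rieszz\mu(x)|\gtrsim\sin\a$ on a subset of $T_1^\a$ of $\mu$-measure $\gtrsim 1$, producing $\int|\rieszz\mu|^2\,d\mu\gtrsim\sin^2\a$.

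The main technical obstacle is the case analysis in the intermediate-scale regime (c) for \eqref{counterexample1}: one must carefully track the contributions from $T_1^\a$, $T_2^\a$, and the pieces of $L_1^\a,L_2^\a$ lying inside $B(x,r)$ in order to obtain the crucial cancellation $|\mu(B(x,r))-2r|=O(\sin^2\a)$ rather than the trivial $O(1)$ bound. A secondary subtlety in \eqref{counterexample2} is confirming that the various tangential contributions to $\rieszz\mu(x)$ for $x\in T_1^\a$ combine to cancel up to $O(\sin^2\a)$, thereby isolating the transverse $\sin\a$-term responsible for the lower bound.
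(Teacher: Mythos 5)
Your strategy for \eqref{counterexample1} (comparing $\mu(B(x,r))$ directly with the flat value $2r$, rather than introducing the paper's auxiliary measure $\mu_\a=\cos\a\,\ha^1|_{T^\a}+\ha^1|_{\Gamma_\a\setminus T^\a}$ and treating $\ha^1|_{\Gamma_\a}-\mu_\a$ as a perturbation) is viable, but the step ``Combining (a)--(c) then gives $\int_0^\infty\Delta^1_\mu(x,r)^2\,dr/r\lesssim\sin^4\a/(d+1)^2$'' has a genuine gap near the three vertices, and the same gap recurs in the ``analogous'' analysis on $T^\a$. Your estimates (a)--(c) only yield $|\Delta^1_\mu(x,r)|\lesssim\sin^2\a/r$ on the non-vanishing range $r\gtrsim d$, where $d$ is the distance from $x$ to the nearest vertex. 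For $d\geq1$ this integrates to $\sin^4\a/d^2$ as you say, but for $d\ll1$ it gives only $\sin^4\a/d^2$, which is not $\mu$-integrable in $d$; and if instead you cap $|\Delta^1_\mu|$ by an absolute constant at scales $r\lesssim\sin^2\a$, the resulting total over the $\sin^2\a$-neighbourhoods of the vertices is of order $\sin^2\a$ --- exactly the size the proposition must beat, so no combination of the bounds you state can give $\sin^4\a$. What is missing is an extra cancellation at small scales near $z_a,z_b,z_c$: since locally the measure is exactly arclength on two rays emanating from the vertex, for $x$ at distance $d$ from the vertex and $d/2\le r\lesssim1$ one has in fact $|\mu(B(x,r))-2r|\lesssim d(1-\cos\a)+d^2\sin^2\a/r\lesssim\sin^2\a\,\min(r,1)$, i.e.\ $|\Delta^1_\mu(x,r)|\lesssim\sin^2\a$ uniformly for $r\le1$ (no factor $1/r$); combined with the observation that for $r\le1/4$ the set $\{x:\Delta^1_\mu(x,r)\neq0\}$ lies within distance $2r$ of a vertex (the interior angles are obtuse, so a ball centred on one edge that meets another edge contains the vertex between them) and hence has $\mu$-measure $\lesssim r$, the small-scale contribution is $\lesssim\sin^4\a\int_0^1 r\,\frac{dr}{r}\lesssim\sin^4\a$. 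This is precisely the mechanism the paper builds in automatically through the identity $|\mu_\a(B(x,r))/r-2|=1-\cos\theta_1\lesssim\min(1,r^{-2})\sin^2\a$ for the weighted measure and its integration over the sets $B_1,B_2$; your direct route works only after you add this near-vertex refinement.

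Concerning \eqref{counterexample2}, the paper gives no proof (it appeals to \cite[Corollary 1.4]{tolsafunct} ``or more elementary methods''), so your explicit multipole/cancellation argument is a legitimate alternative, but two points need care. For $x\in T^\a$ the splitting $\mu=\mu_0+(\mu-\mu_0)$ is not directly usable: for wide truncations $\rieszz\mu_0(x)$ has a vertical component of order $1$ at any point off the line, and this is only cancelled by the near-field part of $\mu-\mu_0$ (the removed segment $[-1/2,1/2]\times\{0\}$ lying just below $x$), so the bound on the bump must come entirely from the tangential cancellation between $L_1^\a$, $L_2^\a$, $T_1^\a$ and $T_2^\a$ that you invoke for the lower bound --- ``linear growth plus dipole structure'' does not suffice. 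Moreover, near the vertices this cancellation leaves a pointwise bound of the form $\sin\a\,\bigl(1+\log(1/\mathrm{dist})\bigr)$ rather than $\sin\a$, which is harmless after squaring and integrating but should be stated. With these corrections your far-field bound $\sin^2\a/D+\sin\a/D^2$, the resulting $L^2$ estimate $\lesssim\sin^2\a$, and the lower bound via the transverse component on the interior of $T_1^\a$ all look sound.
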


It is clear that, letting $\alpha\to0$, 
we will get
$$ \iint_0^\infty \Delta^1_{\ha^1|_{\Gamma_\a}}(x,r)^2 \frac{dr}r\ll 
\sup_{\ve_1,\ve_2>0}\int|\rieszz(\mu)(x)|^2d\mu(x).$$
On the other hand, it is easy to check that $\int W^{\ha^1|_{\Gamma_\a}}_{\frac23,\frac32}(x)d\ha^1|_{\Gamma_\a}(x)=\infty$.

To prove Proposition \ref{counter0}, we consider the auxiliary $1$-AD regular measure on $\Gamma_\a$:
$$\mu_\a= \cos\a\, \ha^1 |_{T^\a}+\ha^1 |_{ \Gamma_\a \setminus T_\a},$$
for which the following holds:

\begin{lemma}
 \label{counter1}
 Let $\a \in (0, \pi/4]$. Then
\begin{equation*}
\iint_0^\infty \Delta^1_{\mu_\a}(x,r)^2 \frac{dr}r\, d\mu_\a(x) \lesssim \sin^4\a.
\end{equation*}
\end{lemma}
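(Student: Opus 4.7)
The key structural fact is that $\mu_\alpha$ is the pushforward of one-dimensional Lebesgue measure under the graph parametrization $F(t) = (t, f(t))$; indeed the $\cos\alpha$ normalization on $T^\alpha$ exactly cancels the arclength factor $\sec\alpha$, so writing $x = F(t_0)$,
\[
\mu_\alpha(B(x,r)) = \bigl|\{t\in\R : (t-t_0)^2 + (f(t)-f(t_0))^2 \leq r^2\}\bigr|.
\]
Since $f$ is $(\tan\alpha)$-Lipschitz, this set is sandwiched between $[t_0 - r\cos\alpha, t_0 + r\cos\alpha]$ and $[t_0 - r, t_0 + r]$, giving $\mu_\alpha(B(x,r))/r \in [2\cos\alpha, 2]$ and the uniform pointwise bound $|\Delta^1_{\mu_\alpha}(x,r)| \leq 2(1-\cos\alpha) \lesssim \sin^2\alpha$.

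I would then identify two regimes where $\Delta^1_{\mu_\alpha}(x,r) = 0$ exactly. \textbf{(V1)} If $f$ is linear on $[t_0-2r, t_0+2r]$ with slope $m$, then $\mu_\alpha(B(x,r)) = 2r/\sqrt{1+m^2}$ is proportional to $r$, so the densities at scales $r$ and $2r$ coincide. \textbf{(V2)} Write $\mu_\alpha = \mathcal{H}^1|_{\R\times\{0\}} + \nu_\alpha$ with
\[
\nu_\alpha := \cos\alpha\cdot\mathcal{H}^1|_{T^\alpha} - \mathcal{H}^1|_{[-1/2,1/2]\times\{0\}};
\]
the identity $\cos\alpha\cdot\sec\alpha = 1$ makes $\nu_\alpha(\R^2) = 0$. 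Denoting by $\Omega$ the closed triangle with vertices $(-1/2,0)$, $(1/2,0)$, $(0, \tan\alpha/2)$, the signed measure $\nu_\alpha$ is supported on $\partial\Omega$; hence whenever $x \in L$ (so $f(t_0)=0$) and $B(x,r) \supset \Omega$, one has $\nu_\alpha(B(x,r)) = 0$ and $\mu_\alpha(B(x,r)) = 2r$ exactly, so $\Delta = 0$. Combining (V1) and (V2) localizes the support of $\Delta$ in $(x,r)$-space to a transition zone of bounded scales.

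The heart of the argument is a refined bound in the transition zone for $x = (t_0, 0) \in L$ at horizontal distance $d := |t_0| - 1/2 \geq 1$ from the tent. There $\mu_\alpha(B(x,r)) - 2r = \nu_\alpha(B(x,r))$, and since $t \mapsto (t-t_0)^2$ is monotone on $[-1/2,1/2]$ with derivative of absolute value at least $2d$, a coarea bound yields
\[
|\nu_\alpha(B(x,r))| \leq \bigl|\{t\in[-1/2,1/2] : (t-t_0)^2 \in (r^2 - \tfrac{\tan^2\alpha}{4},\; r^2]\}\bigr| \lesssim \frac{\sin^2\alpha}{d}
\]
(using $\max_{[-1/2,1/2]} f^2 \leq \tan^2\alpha/4$). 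Consequently $|\Delta^1_{\mu_\alpha}(x,r)| \lesssim \sin^2\alpha / d^2$ throughout the transition window $r \in (d/2,\, d+1)$, whose logarithmic length is $O(1)$; this yields $\int \Delta^2 \, dr/r \lesssim \sin^4\alpha / d^4$ at each such $x$, and then $\int_1^\infty d^{-4}\, dd = O(1)$ produces the contribution $O(\sin^4\alpha)$.

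The remaining regimes ($x \in L$ with $d \in (0,1]$ and $x \in T^\alpha$) constitute a set of bounded $\mu_\alpha$-measure. For $x \in L$ with small $d$, the crude bound $|\Delta| \lesssim \sin^2\alpha$ combined with the $O(\log(1/d))$ logarithmic length of the transition window gives a contribution $\lesssim \sin^4\alpha \int_0^1 \log(1/d)\,dd \lesssim \sin^4\alpha$. For $x \in T^\alpha$ the crude bound suffices at bounded scales, while at large scales the ball encloses $T^\alpha$ and a Pythagorean expansion $\mu_\alpha(B(x,r)) \approx 2\sqrt{r^2 - f(t_0)^2}$ gives $|\Delta| \lesssim \sin^2\alpha/r^2$, integrable against $dr/r$ from $r \sim 1$. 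The principal obstacle is the coarea estimate in the far field: the $d^{-2}$ gain over the uniform $\sin^2\alpha$ bound is essential for summability at infinity, and rests crucially on the pushforward structure (making the leading $2r$ exact on $L$) together with the smallness $\max|f|\lesssim \tan\alpha$ of the tent height.
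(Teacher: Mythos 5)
Your proof is correct, and while its overall skeleton coincides with the paper's (identify an explicit set of $(x,r)$ where $\Delta^1_{\mu_\a}$ vanishes, prove a pointwise bound of size $\sin^2\a$ with an extra quadratic decay in the far field, then integrate over the remaining transition zone of bounded logarithmic length), the way you obtain the key pointwise estimates is genuinely different. The paper bounds $\delta_\mu(B(x,r))=|\mu(B(x,r))-2r|/r$ by $\min(1,r^{-2})\sin^2\a$ through a synthetic planar case analysis (Cases A1, A2, A3 and B), chasing intersection points of $\partial B(x,r)$ with the lines $L_i$ and $T_i$ and estimating angles; you instead exploit the identity $\mu_\a=F_{\#}\mathcal{L}^1$ for the graph map $F(t)=(t,f(t))$, which turns every density into a one--dimensional Lebesgue measure of a sublevel set of $(t-t_0)^2+(f(t)-f(t_0))^2$. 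This buys you exact cancellations (your (V1)--(V2) reproduce the paper's vanishing set $Z$, including the observation that $\mu_\a(B(x,r))=2r$ exactly when $x\in L$ and $T\subset B(x,r)$), an exact Pythagorean formula $\mu_\a(B(x,r))=2\sqrt{r^2-f(t_0)^2}$ at large scales for $x\in T$, and a one-line coarea/monotonicity bound $|\mu_\a(B(x,r))-2r|\lesssim \sin^2\a/d$ in the far field on $L$, which is exactly the $r^{-2}$ gain the paper extracts geometrically in Subcase A3 and Case B, and which is indeed indispensable for summability over $d\ge 1$ since the transition window $(d/2,d+1)$ has logarithmic length bounded below. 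The integration step then matches the paper's treatment of the sets $B_1$, $B_2$. The only place where you are terser than you should be is the regime $x\in T^\a$ at bounded scales: ``the crude bound suffices'' only because (V1) kills all $r$ below half the distance $\rho(x)$ to the nearest of the three vertices, so the per-point window has logarithmic length $\log(C/\rho(x))$ and one must invoke the integrability of $\log(1/\rho)$ along $T^\a$, exactly as you did explicitly for $x\in L$ with $d\in(0,1]$; this is a presentational gloss, not a gap.
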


\begin{figure}
\centering
\includegraphics[scale = 0.6]{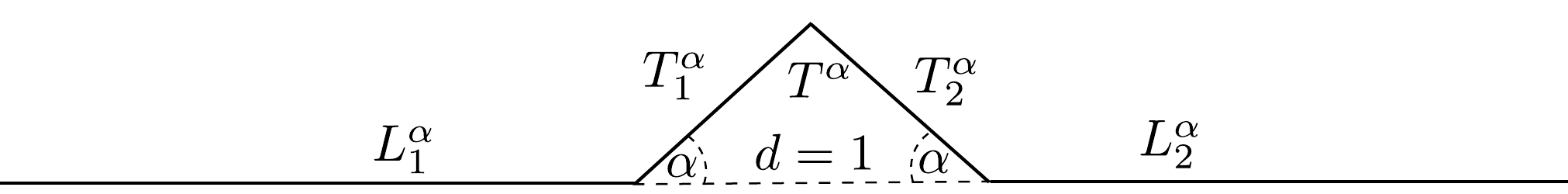}
\caption{The curve $\Gamma_\alpha$.}
\label{fig1}
\end{figure}

Using Lemma \ref{counter1}, we are now able to prove Proposition \ref{counter0}.

\begin{proof}[Proof of Proposition \ref{counter0}]
To show \eqref{counterexample1}, notice that 
\begin{equation}\label{difference}
\begin{split}
\Delta^1_{\ha^1|_{\Gamma_\a}}(x,r)-\Delta^1_{\mu_\a}(x,r)&\le c\,(1-\cos\a)\,\frac{\ha^1(T^\a\cap B(x,2r))}{r}\\&\le c\;\a^2\frac{\ha^1(T^\a\cap B(x,2r))}{r}.
\end{split}
\end{equation}
Hence,
\begin{equation*}
\begin{split}
\biggl(\int_{1/10}^\infty\int\Delta^1_{\ha^1|_{\Gamma_\a}}(x,r)^2& d\ha^1|_{\Gamma_\a}(x)\frac{dr}{r}\biggr)^{1/2}\\
& \lesssim \left(\int_{1/10}^\infty\int\left(\Delta^1_{\ha^1|_{\Gamma_\a}}(x,r)-\Delta^1_{\mu_\a}(x,r)\right)^2d\ha^1|_{\Gamma_\a}(x)\frac{dr}{r}\right)^{1/2}\\&\quad+
\left(\int_{1/10}^\infty\int\Delta^1_{\mu_\a}(x,r)^2d\ha^1|_{\Gamma_\a}(x)\frac{dr}{r}\right)^{1/2}=A+B,
\end{split}
\end{equation*}
the last identity being a definition for $A$ and $B$. By Lemma \ref{counter1}, we have $B\le c\,\a^2$. 
Concerning $A$, using \eqref{difference} and the linear growth of $\ha^1|_{\Gamma_\a}$, we get
\begin{equation*}
\begin{split}
A^2&\le\a^4\int_{r\ge 1/10}\int_{|x|\le 10r}\frac{\ha^1(T^\a\cap B(x,2r))^2}{r^2}d\ha^1|_{\Gamma_\a}(x)\frac{dr}{r}\\\\&\le\a^4\int_{r\ge 1/10}\int_{|x|\le 10r}\frac{d\ha^1|_{\Gamma_\a}(x)}{r^3}dr\le
c\a^4\int_{r\ge 10}\frac{dr}{r^2}\le c\a^4.
\end{split}
\end{equation*}

Now we write
\begin{equation*}
\begin{split}
 \biggl(\int_0^{1/10}\int\Delta^1_{\ha^1|_{\Gamma_\a}}(x,r)^2\,&d\ha^1|_{\Gamma_\a}(x)\frac{dr}{r}\biggr)^{1/2}\\
 & \lesssim \left(\int_0^{1/10}\int\left(\Delta^1_{\ha^1|_{\Gamma_\a}}(x,r)-\Delta^1_{\mu_\a}(x,r)\right)^2d\ha^1|_{\Gamma_\a}(x)\frac{dr}{r}\right)^{1/2}\\&\quad+
\left(\int_0^{1/10}\int\Delta^1_{\mu_\a}(x,r)^2d\ha^1|_{\Gamma_\a}(x)\frac{dr}{r}\right)^{1/2}=C+D,
\end{split}
\end{equation*}
the last identity being the definition of $C$ and $D$. Again by Lemma \ref{counter1}, $D\le c\,\a^2$. 
To estimate $C$, 
we consider the vertices 
$\{z_a\}=L_1^{\a}\cap T_1^{\a}$, $\{z_b\}=T_1^\a\cap T_2^\a$ and $\{z_c\}=T_2^\a\cap L_2^\a$. 
It is easy to check that $\Delta^1_{\ha^1|_{\Gamma_\a}}(x,r)-\Delta^1_{\mu_\a}(x,r)$ vanishes unless 
$z_a\in B(x,2r)$, $z_b\in B(x,2r)$ or 
$z_c\in B(x,2r)$. Then we split the integral in $C$ into three integrals according to the three preceding  cases.
  Since they are treated similarly, we will deal only with the first one. Using again \eqref{difference}, we get
\begin{multline*}
\int_0^{1/10}\int_{z_a\in B(x,2r)}\Delta^1_{\ha^1|_{\Gamma_\a}}(x,r)^2\,d\ha^1|_{\Gamma_\a}(x)\frac{dr}{r}  \\
\lesssim
\a^4\int_0^{1/10}\int_{|x-z_a|< 2r}\left(\frac{\ha^1(T^\a\cap B(x,2r))}{r}\right)^2 d\ha^1|_{\Gamma_\a}(x)\,\frac{dr}{r}\le c\a^4.
\end{multline*}
Gathering all the preceding estimates, \rf{counterexample1} follows.

The proof of \eqref{counterexample2} can be obtained either by \cite[Corollary 1.4]{tolsafunct} or by more elementary methods
(which we leave for the reader).
\end{proof}



\vv

\begin{proof}[Proof of Lemma  \ref{counter1}]
We fix $\a \in (0, \pi/4]$. To simplify notation write $\mu$, $\Gamma$, $T$, $L$ and so on, instead of $\mu_\a$, $\Gamma_\a$, $T^\a$, $L_ \a$. 

To estimate $\Delta_\mu^1(x,r)$ for $x\in\Gamma$ and $r>0$, note first that $\Delta_\mu(x,r)$ vanishes if one of the following conditions holds:
\begin{itemize}
\item $B(x,2r)\cap \Gamma\subset L_i$ for $i=1$ or $2$,
\item $B(x,2r)\cap \Gamma\subset T_i$ for $i=1$ or $2$.
\item $x\in L_1\cup L_2$ and $T\subset B(x,r)$.
\end{itemize}
In this case, we set $(x,r)\in Z$.

In the case $(x,r)\not\in Z$, we write
\begin{equation}
\label{Dd}
\begin{split}
|\Delta^1_{\mu}(x,r)|&\leq \left| \frac{\mu(B(x,r))-2r}{r}\right|+\left| \frac{\mu(B(x,2r))-4r}{2r}\right|\\
&=:\delta_\mu(B(x,r))+\delta_\mu(B(x,2r)).
\end{split}
\end{equation}
We claim that 
\begin{equation}\label{eqclaim000}
\delta_\mu(B(x,r))\lesssim 
\min\biggl(1,\frac1{r^2}\biggr)\,\sin^2\alpha
\qquad \mbox{for $(x,r)\not \in Z$}.
\end{equation}
In fact, this holds for all $x\in\Gamma$ and $r>0$, but we only need to prove it for $(x,r)\not\in Z$.
Let us see that the lemma follows from this estimate. 
We write
\begin{align*}
\iint_0^\infty \Delta^1_{\mu}(x,r)^2\, \frac{dr}r\, d\mu(x) & \lesssim 
 \iint_{(x,r)\in (\Gamma\times (0,\infty))\setminus Z}  \delta_\mu(B(x,r))^2\,
\frac{dr}r\, d\mu(x)\\
& \lesssim
\sin^4\a \iint_{(x,r)\in (\Gamma\times (0,\infty))\setminus Z}  \min\biggl(1,\frac1{r^4}\biggr)
\frac{dr}r\, d\mu(x).
\end{align*}
So, to prove the lemma it is enough to show that the last integral does not exceed some absolute constant.
To this end, denote $\{z_1\}= L_1\cap T_1$, $\{z_2\}= L_2\cap T_2$, and $\{z_0\}= T_1\cap T_2$, and set
$$B_1=\{(x,r) \in\Gamma\times (0,\infty):
z_i\in B(x,2r) \text{ for some }i=0,1,2\text{ and }T\not\subset B(x,r)\} $$
and 
$$B_2=\{(x,r) \in\Gamma\times (0,\infty): x \in T \text{ and }
T\subset B(x,r)\}.$$
Notice that $\Gamma\times (0,\infty))\setminus Z= B_1 \cup B_2$ and if $(x,r) \in B_2$ then $r>1/2$. We then have
\begin{equation*}
\begin{split}
&\iint_{(x,r)\in (\Gamma\times (0,\infty))\setminus Z}  \min\biggl(1,\frac1{r^4}\biggr)
\frac{dr}r\, d\mu(x)\\
&\quad=\iint_{B_1}  \min\biggl(1,\frac1{r^4}\biggr)
\frac{dr}r\, d\mu(x)+\iint_{B_2}  \min\biggl(1,\frac1{r^4}\biggr)
\frac{dr}r\, d\mu(x)\\ &\quad= \sum_{i=0}^2\iint_{\begin{subarray}\,(x,r)\in \Gamma\times (0,\infty)\\
z_i\in B(x,2r),\,T\not\subset B(x,r)\end{subarray}} \min\biggl(1,\frac1{r^4}\biggr)\,\frac{dr}r\, d\mu(x)+\iint_{B_2}  \min\biggl(1,\frac1{r^4}\biggr)
\frac{dr}r\, d\mu(x).
\end{split}
\end{equation*}
For $0\leq i\leq2$ we consider two cases according to whether $r<1/2$ or not. We set
\begin{align*}
\iint_{\begin{subarray}\,(x,r)\in \Gamma\times (0,1/2)\\
z_i\in B(x,2r),\,T\not\subset B(x,r)\end{subarray}} &\min\biggl(1,\frac1{r^4}\biggr)\,\frac{dr}r\, d\mu(x)
= \iint_{\begin{subarray}\,(x,r)\in \Gamma\times (0,1/2)\\
z_i\in B(x,2r),\,T\not\subset B(x,r)\end{subarray}} \frac{dr}r\, d\mu(x).
\end{align*}
Integrating first with respect to $x$, taking into account that $|x-z_i|<2r$, the last integral is bounded by
$$c\int_{r\in (0,1/2)} r\,\frac{dr}r \approx1.$$
For $r>1/2$ we have
\begin{equation}\label{eqahj31}
\iint_{\begin{subarray}\,(x,r)\in \Gamma\times [1/2,\infty)\\
z_i\in B(x,2r),\,T\not\subset B(x,r)\end{subarray}} \min\biggl(1,\frac1{r^4}\biggr)\,\frac{dr}r\, d\mu(x)
= \iint_{\begin{subarray}\,(x,r)\in \Gamma\times [1/2,\infty)\\
z_i\in B(x,2r),\,T\not\subset B(x,r)\end{subarray}} \frac{dr}{r^5}\, d\mu(x).
\end{equation}
It is easy to check that for $(x,r)$ in the domain of integration above we have
$$r-c\leq|z_i-x|\leq r + c$$
for some absolute constant $c$, which implies that
$$\mu\bigl(\{x:(x,r)\in \Gamma\times [1,\infty),
z_i\in B(x,2r),\,T\not\subset B(x,r)\}\bigr)\leq c'.$$
Thus the integral in \rf{eqahj31} is bounded by
$$c\int_{r>1/2} \frac{dr}{r^5}\lesssim1.$$
Therefore we have shown that
\begin{equation}
\label{b1}
\iint_{B_1}  \min\biggl(1,\frac1{r^4}\biggr)
\frac{dr}r\, d\mu(x) \lesssim 1.
\end{equation}

In the same way, for $r>1/2$
$$\mu\bigl(\{x:(x,r)\in B_2\}\bigr)\leq c'',$$
hence
\begin{equation}
\label{b2}
\iint_{B_2}  \min\biggl(1,\frac1{r^4}\biggr)
\frac{dr}r\, d\mu(x) \lesssim 1.
\end{equation}
Thus \eqref{b1} and \eqref{b2} imply
$$\iint_{(x,r)\in (\Gamma\times (0,\infty))\setminus Z}  \min\biggl(1,\frac1{r^4}\biggr)
\frac{dr}r\, d\mu(x)\lesssim1,$$
as wished.

\vv
To prove the claim \rf{eqclaim000} we distinguish several cases:

\vv

\textbf{Case A:} $(x,r)\not \in Z$ and $x \in T$.\\

\textit{Subcase A1:} $B(x,2r)\cap\Gamma \subset T$. \\
\vv 
We note that this subcase is possible only for $r<2$. We write
\begin{equation}
\label{cosdens}
\begin{split}
\delta_\mu(B(x,r))&\leq \left| \frac{\mu(B(x,r))}{r}-2 \, \cos \a\right|+2(1-\cos \a)\\
&\approx \left| \frac{\mu(B(x,r))}{r}-2 \, \cos \a\right|+\sin^2 \a.
\end{split}
\end{equation}

Since $(x,r) \notin Z$ we have that $B(x,2r) \cap T_i \neq \varnothing$ for both $i=1,2$. Without loss of generality let $x\in T_2$ and set $\{v_0\}=T_1 \cap T_2,$ $\{v_i\}=\partial B(x,2r) \cap T_i$, for $i=1,2$, $\{p\} = \partial B(x,2r) \cap L_{v_0v_2} \stm \{v_2\}$, where $L_{v_0v_2}$ denotes the line crossing $v_0$ and $v_2$. Let also $p'$ be the point of intersection of $L_{v_0v_1}$ and the perpendicular line to $L_{v_0v_2}$ which passes through $p$. See also Figure \ref{a1}. 
\begin{figure}
\centering
\begin{minipage}{.45\textwidth}
  \centering
  \includegraphics[width=1\linewidth]{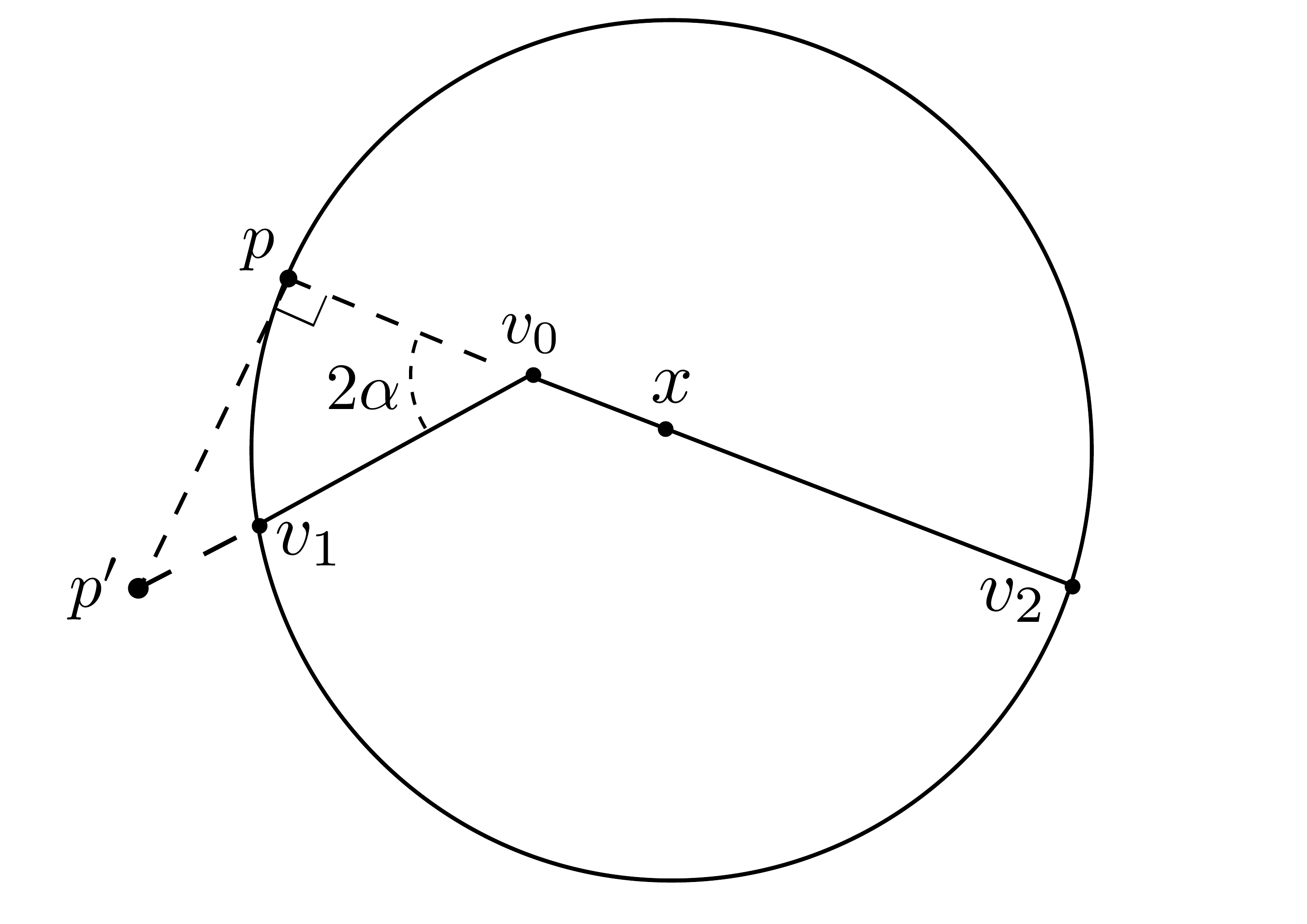}
  \caption{The case $A1$}
\label{a1}
 
\end{minipage}%
\begin{minipage}{.45\textwidth}
  \centering
  \includegraphics[width=1\linewidth]{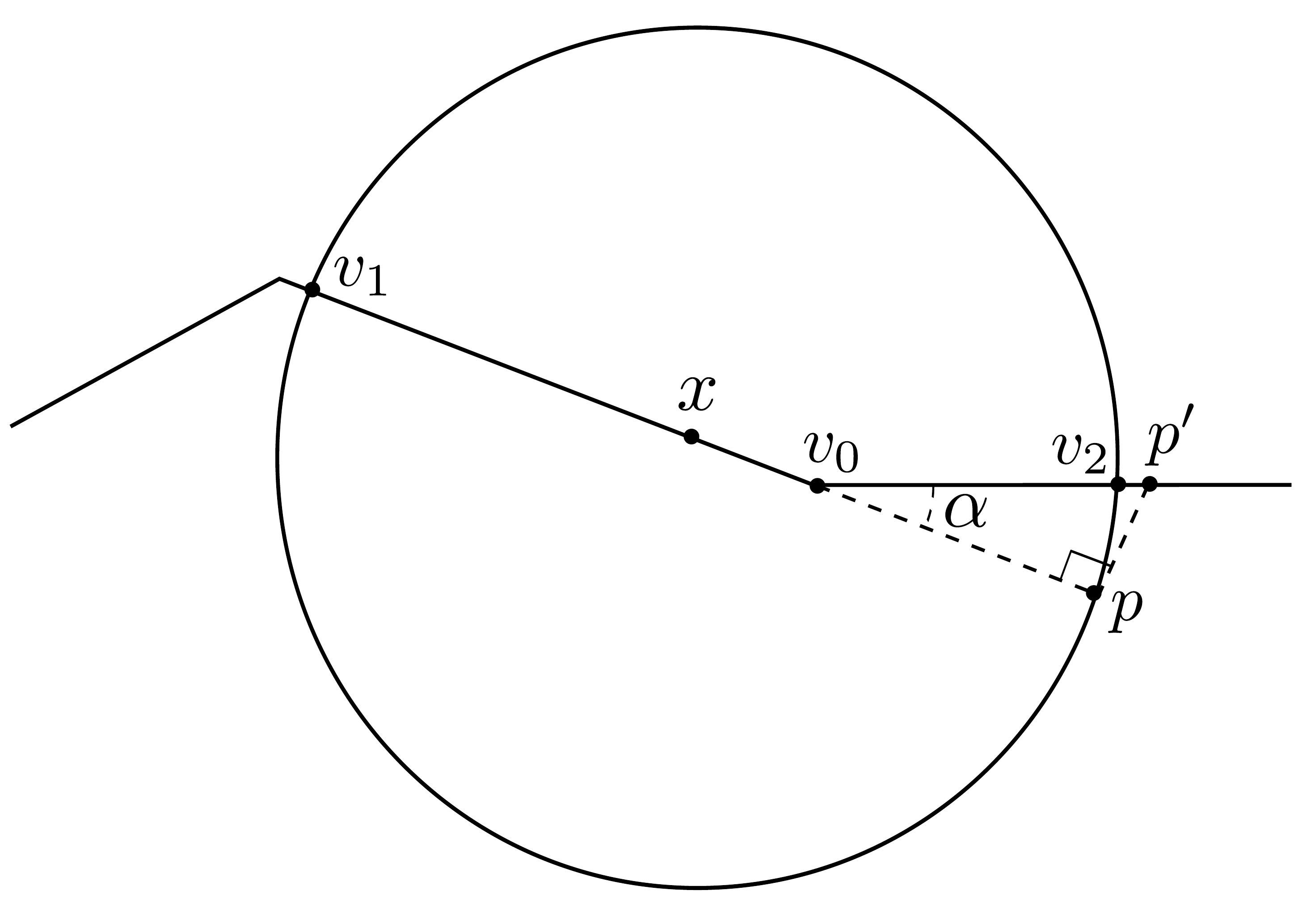}
  \caption{The case $A2$}
\label{a2}
\end{minipage}
\end{figure}

We have
\begin{equation*}
\begin{split}
|\mu(B(x,r))-2\, r\,\cos \a|&=|\cos \a \, (d(v_1,v_0)+d(v_0,v_2))-\cos \a (d(v_0,v_2)+d(v_0,p))|\\
&\approx |d(v_1,v_0)-d(v_0,p))|.
\end{split}
\end{equation*}
Observing that 
\begin{equation*}
\begin{split}
|d(v_1,v_0)-d(v_0,p)|&\leq d(v_0,p')-d(v_0,p)=d(v_0,p')-d(v_0,p') \cos(2\a)\\
&\approx \sin^2(2\a) \, d(v_0,p') \lesssim r\sin^2 \a 
\end{split}
\end{equation*}
 and recalling \eqref{cosdens}, we deduce that $\delta_\mu(B(x,2r)) \lesssim \sin^2 \a$.



\vv

\textit{Subcase A2:} $B(x,2r)\cap\Gamma \subset T_i \cup L_i$ for $i=1$ or $2$.\\
\vv
Without loss of generality we can assume that $i=2$. We consider the following points:
 $\{v_0\}=L_2 \cap T_2$, $\{v_1\}=\partial B(x,2r) \cap T_2$, $\{v_2\}=\partial B(x,2r) \cap L_2$, and $\{p\} = \partial B(x,2r) \cap L_{v_0v_1} \stm \{v_1\}$. Let also $p'$ be the point of intersection of $L_{v_0v_2}$ and the perpendicular line to $L_{v_0v_1}$ which passes through $p$. See also Figure \ref{a2}. 

We have
\begin{equation*}
\begin{split}
|\mu(B(x,r))-4\, \cos \a \,r|&=|\cos \a \, d(v_1,v_0)+d(v_0,v_2)-\cos \a (d(v_0,v_1)+d(v_0,p))|\\
&\leq (1-\cos \a) d(v_0,v_2) +\cos \a \,(d(v_0,v_2)-d(v_0,p))\\
& \lesssim r\sin^2 \a +\cos \a \,(d(v_0,v_2)-d(v_0,p)).
\end{split}
\end{equation*}
As in the previous subcase,
\begin{equation*}
\begin{split}
d(v_0,v_2)-d(v_0,p)&\leq d(v_0,p')-d(v_0,p)=d(v_0,p')-d(v_0,p') \cos \a\\
&\approx d(v_0,p')\sin^2 \a   \lesssim r\sin^2 \a,
\end{split}
\end{equation*}
hence we deduce that $\delta_\mu(B(x,2r)) \lesssim \sin^2 \a$.  

\vv
\textit{Subcase A3:} $B(x,r)\cap L_1 \neq \emptyset$ and $B(x,r)\cap L_2 \neq \emptyset$.\\
\vv
If $r\leq 1$, combining the arguments from the two previous cases, it follows that $\delta_\mu(B(x,r)) \lesssim \sin^2 \a$.

We now consider the case  when $r >1$ and without loss of generality we assume that $x=(w,h) \in T_1$. Given two lines $L,L'$, we denote by $\meas(L,L')$ the smallest angle between $L$ and $L'$. Let $\{v_1\}= \partial B(x,r) \cap L_1$ and let $\theta_1= \meas (L_{x,v_1},L_1)$. Then it follows easily that
$$\delta_\mu (B(x,r)) \lesssim \sin^2\theta_1 \approx \left( \frac{h}{r} \right)^2 \leq \frac{\sin^2 \alpha}{r^2},$$
see also Figure \ref{fig3}.

\vv
\textbf{Case B:} $(x,r) \notin Z \text{ and }x \notin T$.\\
\begin{figure}
\centering
\includegraphics[scale = 0.6]{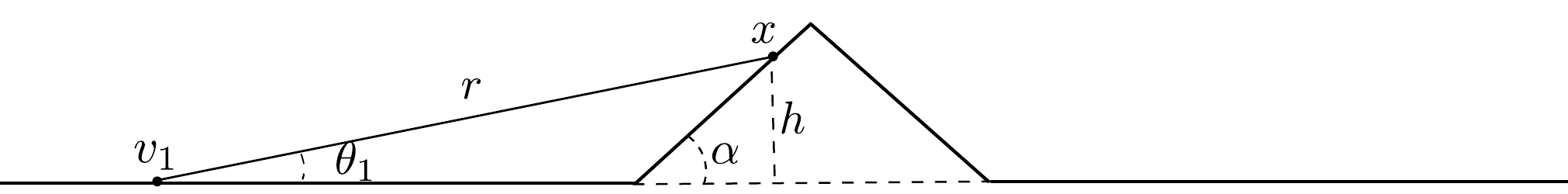}
\caption{The curve $\Gamma_\alpha$.}
\label{fig3}
\end{figure}

Let $\{v_1\}=\partial B(x,r)  \cap T$, then if $\theta_1=\meas(L_{xV_1}, L_2)$  we get 
\begin{equation}
\label{b1de}
\delta_\mu (B(x,r)) = 1-\cos \theta_1 \approx \sin^2 \theta_1.
\end{equation}
Since $\theta_1 < \alpha$, we have that $\sin^2 \theta_1 < \sin^2 \alpha$. Moreover if $r>1$, as in subcase A3, we get that $\sin^2 \theta_1 < \frac{\sin^2 \alpha}{r^2}$. Therefore,
$$\delta_\mu (B(x,r)) \lesssim \min\biggl(1,\frac1{r^2}\biggr)\,\sin^2\alpha.$$

Thus \eqref{eqclaim000} follows and the proof of the lemma is complete. 

\end{proof}

\vvv
We wish now to compare the integral $\iint_0^\infty\Delta_\mu^1(x,r)^2\frac{dr}{r}d\mu(x)$ to
the analogous one involving the so called $\beta$-numbers of Peter Jones, which play a key role
in the theory of the so called quantitative rectifiability (see \cite{Jones}, \cite{DS1} and \cite{DS2},
for example).
Given a Radon measure $\mu$ in $\R^d$, the Jones' $\beta$-numbers are defined as follows. For $x \in \supp(\mu)$ and $r>0,$ set, for $1\leq p<\infty$,
$$\beta^\mu_p(B(x,r))= \inf_L \left(\int_{B(x,r)} \frac{\dist(y,L)^p}{r^{p+1}}d \mu (y)\right)^{1/p},$$
and
$$\beta^\mu_\infty(B(x,r))= \inf_L \sup_{y \in B(x,r) \cap \supp(\mu)} \frac{\dist(y,L)}{r},$$
where in both cases the infimum is taken over all lines $L\subset\R^d$. 

By \cite[Theorem 6]{dorronsoro} (in the case $1\leq p<\infty$), for the measure $\mu_\a$ of  Proposition \ref{counter0} we have
$$\iint_0^\infty \beta^{\mu_\a}_p(B(x,r))^2\frac{dr}{r}d\mu_\a(x)\approx \|f'\|_2^2\approx\sin^2\a.$$ 
This also holds for $p=\infty$, by \cite{Jones}. 
So together with Proposition \ref{counter0}, this yields
$$\iint_0^\infty \Delta_{\mu_\a}^1(x,r)^2\frac{dr}{r}d\mu_\a(x)\ll
\iint_0^\infty \beta^{\mu_\a}_p(B(x,r))^2\frac{dr}{r}d\mu_\a(x)\qquad \mbox{as $\alpha\to0$,}$$
for all $1\leq p \leq\infty$.



\vvv


\begin{thebibliography}{MTV2}

\bibitem[AH]{ah} D. R. \ Adams and L. I. \ Hedberg, {\em Function spaces and potential theory}, Grundlehren
Math. Wiss., vol. 314, Springer--Verlag, Berlin 1996.

\bibitem[CGLT]{CGLT} V.\ Chousionis, J.\ Garnett, T.\ Le and X.\ Tolsa, {\em Square
functions and uniform rectifiability}, to appear in Trans. Amer. Math. Soc.



\bibitem[DaS1]{DS1} G. David and S. Semmes, {\em Singular Integrals and rectifiable sets in $\mathbb {R}
^n$: Au-del\`{a} des graphes lipschitziens.} Ast\'erisque
193, Soci\'{e}t\'{e} Math\'{e}matique de France (1991).

\bibitem[DaS2]{DS2} G. David and S. Semmes, \emph{Analysis of and on uniformly
rectifiable sets}, Mathematical Surveys and Monographs, 38. American
Mathematical Society, Providence, RI, (1993).


\bibitem[Do]{dorronsoro} J.R. Dorronsoro, {\em Mean oscillation and Besov spaces}, Canad. Math. Bull., 28 (4) (1985), 474--480.

\bibitem[ENV]{ENV} V. Eiderman, F. Nazarov and A. Volberg, {\em Vector-valued Riesz potentials:Cartan-type estimates and related capacities}, Proc. Lond. Math. Soc. (3)
101 (2010), no. 3, 727--758.


\bibitem[EV1]{ev1} V. Eiderman and A. Volberg, {\em $L^2$-norm and estimates from below for Riesz transforms
on Cantor sets}, Indiana Univ. Math. J., 60 (2011), no. 4, 1077--1112.

\bibitem[EV2]{ev} V. Eiderman and A. Volberg, {\em Non-homogeneous harmonic analysis: 16 years of development}, Russ. Math. Surv. (68) 973, (2013).

\bibitem[JNV]{JNV} B. Jaye, F. Nazarov and A. Volberg, {\em The fractional Riesz transform and an exponential potential}, St. Petersburg Math. J. Vol. 24 (2013), no. 6, 
903--938.

\bibitem[Jo]{Jones} P.W. Jones, {\em Rectifiable sets and the traveling salesman problem},
    {Invent. Math.}{102:1}, (1990), 1--15.


\bibitem[M]{Marstrand} J.M. Marstrand, {\em The $(\phi,s)$ regular subsets of $n$ space}, Trans. Amer. Math. Soc {\bf 113} (1964), 369--392.

\bibitem[Ma]{Mattila-llibre} P. Mattila, {\em Geometry of sets and measures in
Euclidean spaces,} Cambridge Stud. Adv. Math. 44, Cambridge Univ.
Press, Cambridge, 1995.

\bibitem[MPV]{mpv}
J. Mateu, L. Prat and J. Verdera,
{\em The capacity associated to signed Riesz kernels, and Wolff potentials},
J. Reine Angew. Math. {\bf 578} (2005), 201--223.

\bibitem[Na]{nar} R. Narasimhan, {\em Introduction to the Theory of Analytic Spaces}, 
Lecture Notes in Mathematics, 25. Springer-Verlag, Berlin, (1966).






\bibitem[RT]{RT} M.C. Reguera and X. Tolsa, {\em Riesz transforms of non-integer homogeneity on uniformly disconnected sets}, to appear in Trans. Amer. Math. Soc.

\bibitem[T1]{tolsafunct} X.~Tolsa, {\em Principal values for Riesz transforms and rectifiability},  J. Funct. Anal., vol. 254(7) 2008, 1811-1863.

\bibitem[T2]{Tolsawolff} X.~Tolsa, {\em Calderon-Zygmund capacities and Wolff potentials on Cantor sets}, J. Geom. Anal. 21(1) 2011, 195-223.
 
\bibitem[T3]{Tolsa-llibre}
X.~Tolsa,
{\em Analytic capacity, the {C}auchy transform, and non-homogeneous
  {C}alder\'on-{Z}ygmund theory}, volume 307 of {\em Progress in Mathematics}.
 Birkh\"auser Verlag, Basel, 2014.
 
\bibitem[T4]{tolsaarxiv} X.~Tolsa,
{\em Rectifiable measures, square functions involving densities, and the Cauchy transform.} Preprint (2014).
 
\bibitem[TTo]{tatianatolsa} X. Tolsa and T. Toro, {\em Rectifiability via a square function and Preiss' Theorem}, to appear in IMRN.



\end{thebibliography}
\end{document}